\newcommand{\C}{{\mathbb C}}
\newcommand{\R}{{\mathbb R}}
\renewcommand{\O}{{\mathcal O}}
\newcommand{\Om}{{\Omega}}
\newtheorem{theorem}{\bf Theorem}
\newtheorem{proposition}[theorem]{\bf Proposition}
\newtheorem{corollary}[theorem]{\bf Corollary}
\title{Schlicht envelopes of holomorphy \\ and foliations by lines}
\author{Finnur L\'arusson}
\address{School of Mathematical Sciences, University of Adelaide, Adelaide SA 5005, Australia.} 
\email{finnur.larusson@adelaide.edu.au}
\author{Rasul Shafikov}
\address{Department of Mathematics, University of Western Ontario, London, Ontario N6A~5B7, Canada.} 
\email{shafikov@uwo.ca}
\subjclass[2000]{Primary 32D10.  Secondary 32A10, 32A40, 32D15, 32M25, 32Q28, 32S25, 37F75.}
\date{10 February 2008.  Minor changes 7 August 2008}
\begin{document}

\begin{abstract}  Given a domain $Y$ in a complex manifold $X$, it is a difficult problem with no general solution to determine whether $Y$ has a schlicht envelope of holomorphy in $X$, and if it does, to describe the envelope.  The purpose of this paper is to tackle the problem with the help of a smooth 1-dimensional foliation $\mathcal F$ of $X$ with no compact leaves.  We call a domain $Y$ in $X$ an {\it interval domain} with respect to $\mathcal F$ if $Y$ intersects every leaf of $\mathcal F$ in a nonempty connected set.  We show that if $X$ is Stein and if $\mathcal F$ satisfies a new property called {\it quasiholomorphicity}, then every interval domain in $X$ has a schlicht envelope of holomorphy, which is also an interval domain.  This result is a generalization and a global version of a well-known lemma from the mid-1980s.  We illustrate the notion of quasiholomorphicity with sufficient conditions, examples, and counterexamples, and present some applications, in particular to a little-studied boundary regularity property of domains called local schlichtness.
\end{abstract}

\maketitle

\tableofcontents

\section{Introduction}

\noindent
The roots of the subject of several complex variables go back to the discovery, a century ago, that in contrast to the 1-dimensional case, there are domains $\Om$ in higher dimensions such that all holomorphic functions on $\Om$ extend holomorphically to a larger domain.  This leads directly to the notion of the envelope of holomorphy.  The {\it envelope of holomorphy} of a complex manifold $Y$ is a Stein manifold $Z$ with a holomorphic embedding of $Y$ onto a domain in $Z$ that induces a bijection between the sets of holomorphic functions on $Y$ and on $Z$.  If it exists, the envelope is uniquely determined up to isomorphism, and the assignment of its envelope to a manifold that has one is functorial.  If $Y$ is a domain in a complex manifold $X$ and there is a Stein domain $Z$ in $X$ containing $Y$ to which all holomorphic functions on $Y$ extend holomorphically, then $Z$ is the envelope of holomorphy of $Y$, and we say that $Y$ has a {\it schlicht} envelope of holomorphy in $X$.

Given a domain $Y$ in a complex manifold $X$, it is a difficult problem with no general solution to determine whether $Y$ has a schlicht envelope of holomorphy in $X$, and if it does, to describe the envelope.  The purpose of this paper is to tackle the problem with the help of a smooth 1-dimensional foliation $\mathcal F$ of $X$.  

The rough idea is to continue the holomorphic functions on $Y$ along the leaves of $\mathcal F$ (rather than along arbitrary curves in $X$) as far away from $Y$ as possible, and obtain the envelope of holomorphy of $Y$ as the union of the maximal leaf segments through $Y$ along which all holomorphic functions on $Y$ can be continued.  It is natural to require the leaves of $\mathcal F$ to be noncompact, that is, exclude embedded circles as leaves, and to insist that $Y$ intersect each leaf, which is then an injectively immersed real line, in a nonempty connected set.  Namely, as we continue holomorphic functions beyond $Y$ along leaves of $\mathcal F$, we do not want to come back to where we have already been.  And if $Y$ does not intersect every leaf, we might as well replace $X$ by the saturation of $Y$ with respect to $\mathcal F$.

With these requirements, the rough idea works, as long as $\mathcal F$ satisfies a new property that we call {\it quasiholomorphicity}.  Continuation of holomorphic functions on $Y$ along leaves of $\mathcal F$ produces a locally Stein domain $Z$ in $X$ containing $Y$ to which all holomorphic functions on $Y$ extend holomorphically.  If $X$ is Stein, so is $Z$, so $Y$ has a schlicht envelope of holomorphy in $X$, about which we have the additional information that it intersects every leaf of $\mathcal F$ in a nonempty connected set.

This result, stated more precisely below as Theorem \ref{Trepreau1}, can be viewed as a generalization and a global version of a well-known lemma from the mid-1980s (Theorem \ref{Trepreau0} below) that we shall call the schlichtness lemma.  This lemma corresponds to our result with $X$ being a box in $\C^n$ foliated by straight line segments parallel to one of its edges.

Following the proof of our global schlichtness lemma, we illustrate the notion of quasiholomorphicity with sufficient conditions, examples, and counterexamples, and present some applications, in particular to a little-studied boundary regularity property of domains called local schlichtness.

\medskip

Let us summarize the contents of the paper in more detail.  We need a few new definitions.  Let $\mathcal F$ be a smooth 1-dimensional foliation of a complex manifold $X$ with no compact leaves.  Then the leaves of $\mathcal F$ are injectively immersed lines (not necessarily embedded).  A domain $\Om$ in $X$ is called an {\it interval domain} with respect to $\mathcal F$ if $\Om$ has a nonempty connected intersection with each leaf of $\mathcal F$.

Let $Y$ be a domain in $X$ with a connected, but possibly empty, intersection with each leaf of $\mathcal F$.  Then $Y$ is an interval domain in its saturation $Y^\mathcal F$ (the union of all leaves that intersect $Y$; this is a domain in $X$).  It is easily shown that there is a largest interval domain $Z$ in $Y^\mathcal F$ containing $Y$ to which all holomorphic functions on $Y$ extend holomorphically (see the first paragraph of the proof of Theorem \ref{Trepreau1} below).  We call $\mathcal F$ a {\it good foliation} if $Z$ is locally Stein in $Y^\mathcal F$ for all $Y$ (meaning that every boundary point of $Z$ in $Y^\mathcal F$ has an open neighbourhood $U$ in $Y^\mathcal F$ such that $Z\cap U$ is Stein).

Next, we say that $\mathcal F$ is {\it quasiholomorphic} if it satisfies the following two properties:
\begin{enumerate}
\item if $p,q\in X$ lie in the same leaf, then there is an open neighbourhood $U$ of $p$ in $X$ and a biholomorphism $h$ from $U$ onto an open neighbourhood of $q$, taking $p$ to $q$, such that $h(x)$ lies in the leaf of $x$ for all $x\in U$; and
\item if $a\in U$ and $b$ is a point in the leaf of $a$ between $a$ and $h(a)$, then there is a biholomorphism $k$ from $U$ onto an open neighbourhood of $b$, taking $a$ to $b$, such that $k(x)$ lies in the leaf of $x$ between $x$ and $h(x)$ for every $x\in U$.
\end{enumerate}
Note that quasiholomorphicity is a semilocal property in the sense that $\mathcal F$ is quasiholomorphic if and only if $\mathcal F$ is quasiholomorphic on a saturated open neighbourhood of each of its leaves.

Finally, we say that $\mathcal F$ has a {\it holomorphic atlas} if $X$ is covered by holomorphic charts that take $\mathcal F$ to the foliation of $\C^n$ ($n=\dim X$) by the straight lines along which $\mathrm{Im}\,z_1$ and $z_2,\dots,z_n$ are constant.  By the rectification theorem for holomorphic vector fields, $\mathcal F$ has a holomorphic atlas if and only if $\mathcal F$ is induced by a nowhere-vanishing holomorphic vector field on a neighbourhood of each point of $X$.  This is of course a local property.  We do not know whether being good is a local or semilocal property.

\medskip\noindent
{\bf Main Theorem.}  {\it Let $\mathcal F$ be a smooth 1-dimensional foliation of a complex manifold $X$ with no compact leaves.  If $\mathcal F$ is quasiholomorphic, then $\mathcal F$ is good.  If $\mathcal F$ has a holomorphic atlas, then $\mathcal F$ is quasiholomorphic.}
\medskip

We do not know whether or to what extent the converses of these implications are true (except in complex dimension 1, where every smooth 1-dimensional foliation with no compact leaves is good for trivial reasons, but need not be quasiholomorphic, as shown by examples given below).  As a corollary, we obtain a generalization of the schlichtness lemma.

\medskip\noindent
{\bf Global Schlichtness Lemma.}  {\it Let $\mathcal F$ be a quasiholomorphic foliation on a complex manifold $X$, and let $Y$ be an interval domain in $X$ with respect to $\mathcal F$.  

There is a largest interval domain $Z$ in $X$ with respect to $\mathcal F$ containing $Y$ to which all holomorphic functions on $Y$ extend holomorphically.  Moreover, $Z$ is locally Stein in $X$.  

Hence, if $X$ is Stein, so is $Z$, so $Z$ is the envelope of holomorphy of $Y$.  In particular, if $X$ is Stein, then $Y$ has a schlicht envelope of holomorphy in $X$.}
\medskip

In general, if $X$ is not Stein, $Z$ being locally Stein does not imply that $Z$ is the envelope of holomorphy of $Y$.  However, there are positive results on the Levi problem for certain non-Stein manifolds, and for such manifolds further conclusions can be drawn from $Z$ being locally Stein.

We call a domain $\Om$ in a complex manifold {\it locally schlicht} at a boundary point $p$ if $p$ has a basis of connected Stein neighbourhoods $U$ such that $\Om\cap U$ has a schlicht envelope of holomorphy in $U$.  As shown below, it follows from the schlichtness lemma that a domain is locally schlicht at a smooth boundary point.  The original motivation for our work was to extend this result to singular boundary points.  Using the global schlichtness lemma, we are able to prove local schlichtness at well-behaved isolated boundary singularities.

\medskip\noindent
{\bf Local Schlichtness Theorem.}  {\it Let $\rho$ be a smooth real-valued function on a neighbourhood of the origin $0$ in $\C^n$.  Suppose $\rho$ has a nondegenerate hermitian critical point at $0$, which is not a minimum.  Then the open set $\{\rho<\rho(0)\}$ is locally schlicht at $0$.}
\medskip

The hermitian condition means that the linear part of the Taylor expansion of the gradient of $\rho$ at $0$ is complex-linear.

\section{The schlichtness lemma and local schlichtness at a boundary point}

\noindent
The work presented in this paper starts with a lemma from the mid-1980s that we shall call the schlichtness lemma.  The following statement is a slight modification of the result proved by Jean-Marie Tr\'epreau as Lemma 1.2 in his paper \cite{Trepreau}.  Tr\'epreau says about this result:  \lq\lq Le lemme suivant est, sur le fond, bien connu des sp\'ecialistes.\rq\rq\  The schlichtness lemma also appeared in the unpublished Ph.D.\ thesis of Berit Stens\o nes, completed in 1985, see Theorem 2 in \cite{Rea} and the proof of Theorem 2.7 in \cite{Stensones}.  See also Lemma 1 in \cite{Chirka}.  A related result appeared much earlier in \cite{Vladimirov}, Section 21.5.  The norms used below are the maxima over the absolute values of the relevant real coordinates.

\begin{theorem}[The Schlichtness Lemma]  Let $Y=\{(z,x)\in\C^{n-1}\times\R:|(z,x)|<r\}$ and $X=Y\times (-r,r)$, where $r>0$.  Let $u:Y\to\R$ be a lower semicontinuous function with $|u|<r/3$ and let $\Om=\{(z,x+iy)\in X:y<u(z,x)\}$.  Then there is a lower semicontinuous function $v:Y\to\R$ with $v\geq u$ such that the domain $\tilde\Om=\{(z,x+iy)\in X:y<v(z,x)\}$ is the envelope of holomorphy of $\Om$, meaning that $\tilde\Om$ is Stein and every holomorphic function on $\Om$ extends to a holomorphic function on $\tilde\Om$.
\label{Trepreau0}
\end{theorem}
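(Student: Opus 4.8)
The plan is to obtain the envelope of holomorphy of $\Om$ by continuing the holomorphic functions on $\Om$ as far upward as possible along the vertical lines $\{z=\mathrm{const},\ \mathrm{Re}\,w=\mathrm{const}\}$ of $X$ (I write the last coordinate as $w=x+iy$), and then to deduce that the resulting domain is Stein from the fact that $X$ is. We may assume $n\ge 2$, since for $n=1$ the domain produced is planar and there is nothing to prove. As $|u|<r/3$, the slab $\{(z,w)\in X:y<-r/3\}$ lies in $\Om$, so $\Om$ is connected and meets every vertical line in a nonempty initial segment. For $(z_0,x_0)\in Y$ let $v(z_0,x_0)$ be the supremum of the numbers $t\in(-r,r]$ for which some box $\{(z,w)\in X:|z-z_0|<\varepsilon,\ |\mathrm{Re}\,w-x_0|<\varepsilon,\ y<t\}$ with $\varepsilon>0$ has the property that every $f\in\O(\Om)$ extends holomorphically to it. Directly from the definition, $v\ge u$ (take the box inside $\Om$, using lower semicontinuity of $u$) and $v$ is lower semicontinuous; put $\tilde\Om=\{(z,w)\in X:y<v(z,\mathrm{Re}\,w)\}$, which is open, contains $\Om$, meets each vertical line in an initial segment, and — like $\Om$ — is connected. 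The local extensions furnished by the definition of $v$ agree on overlaps, because every component of such an overlap is joined to $\Om$ by a vertical segment, so each $f\in\O(\Om)$ extends to a unique $\tilde f\in\O(\tilde\Om)$; as $\tilde\Om$ is connected, $f\mapsto\tilde f$ is a bijection $\O(\Om)\to\O(\tilde\Om)$. Thus $\tilde\Om$ has the shape and extension property required of the envelope, and it remains only to prove that $\tilde\Om$ is Stein.

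The key preliminary observation is that $v$ has no downward jumps: $v(z_0,x_0)=\liminf_{(z,x)\to(z_0,x_0)}v(z,x)$ for all $(z_0,x_0)$. If not, fix $t$ with $v(z_0,x_0)<t<\liminf_{(z,x)\to(z_0,x_0)}v(z,x)$ and choose $\varepsilon$ so small that $v>t$ on the punctured ball $0<|(z,x)-(z_0,x_0)|<\varepsilon$. For the box $B$ of width $\varepsilon$ and height $t$ one then checks that $B\cap\tilde\Om=B\setminus\sigma$, where $\sigma=\{z=z_0,\ \mathrm{Re}\,w=x_0,\ v(z_0,x_0)\le y<t\}$ is a bounded arc lying in the fibre $\{z=z_0\}$ and, since $-r/3<v(z_0,x_0)$ and $t<r$, well inside $X$. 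Every $\tilde f$ restricts to a holomorphic function on $B\setminus\sigma$, and because $\sigma\subseteq\{z=z_0\}$ a Cauchy integral in the $z$-variables (Hartogs' theorem when $n\ge3$) extends it holomorphically across $\sigma$ to all of $B$; hence $v(z_0,x_0)\ge t$, a contradiction. Consequently every point of $\partial\tilde\Om\cap X$ lies on the graph of $v$, that is, has the form $p_0=(z_0,x_0+iv(z_0,x_0))$ with $v(z_0,x_0)<r$. Now identify $X$ with the bounded convex — hence Stein — domain $\{|z|<r\}\times\{w:|\mathrm{Re}\,w|<r,\ |\mathrm{Im}\,w|<r\}$ in $\C^n$. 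By the Docquier--Grauert theorem it suffices to show $\tilde\Om$ is locally Stein in $X$. If it is not, then, $\tilde\Om$ being a domain in $\C^n$, it is not pseudoconvex near some $p_0\in\partial\tilde\Om\cap X$; by the previous paragraph $p_0$ lies on the graph of $v$, and a standard Hartogs-figure argument (the continuity theorem applied to the holomorphic functions on $\tilde\Om$) produces a polydisc $W$ with $p_0\in W\subseteq X$ to which every holomorphic function on $\tilde\Om$, in particular every $\tilde f$, extends holomorphically. Because $p_0$ lies on the graph, $W$ attaches to $\tilde\Om$ along every vertical line through a fixed neighbourhood of $(z_0,x_0)$ — here one uses lower semicontinuity of $v$ together with the shape of $W$ — so for suitable $\varepsilon_\ast,\delta_\ast>0$ the box $\{|(z,\mathrm{Re}\,w)-(z_0,x_0)|<\varepsilon_\ast,\ y<v(z_0,x_0)+\delta_\ast\}$ is contained in $\tilde\Om\cup W$; hence every $f\in\O(\Om)$ extends to it, which forces $v(z_0,x_0)\ge v(z_0,x_0)+\delta_\ast$, absurd. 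Therefore $\tilde\Om$ is Stein, and with $v$ as constructed the theorem follows.

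The heart of the matter — and the step I expect to need the most care — is the interplay of the two ``filling'' arguments with the vertical foliation: the removability used to forbid downward jumps of $v$ (which confines $\partial\tilde\Om$ to the graph), and the box-attachment that converts a failure of pseudoconvexity into a genuine enlargement of the continuation, contradicting the definition of $v$. Both depend on the vertical lines being transverse to the graph of $v$, and crucially on the hypothesis $|u|<r/3$: this makes $\Om$ and $\tilde\Om$ connected interval domains filling the bottom of $X$, so that the continuations are unambiguous, and it keeps the obstruction $\sigma$ a bounded arc lying strictly inside $X$, where the Cauchy-integral argument applies. The remaining ingredients — the theorems of Docquier--Grauert, Oka, and Cartan--Thullen, the classical continuity theorem, and the elementary verifications that $\Om$, $\tilde\Om$, and the relevant overlaps are connected — are standard.
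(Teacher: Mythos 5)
Your overall architecture --- continue the functions along the vertical foliation to obtain the largest ``interval domain'' $\tilde\Om$, show it is locally Stein at every boundary point in $X$, then invoke Oka/Docquier--Grauert --- is exactly the skeleton of the paper's argument; the paper does not prove this lemma directly but derives it from its Global Schlichtness Lemma (Theorem \ref{Trepreau1}), whose proof has the same three stages. Your pointwise definition of $v$ via boxes, the gluing of the local extensions (overlaps of the boxes are convex, hence connected, and meet $\Om$), and the removability argument showing $v$ has no downward jumps --- so that $\partial\tilde\Om\cap X$ lies on the graph of $v$ --- are all sound; in particular the Cauchy-integral extension across the vertical segment $\sigma$ works even though $\sigma$ reaches the top face of the box, since one only needs a circle in a $z$-variable avoiding $\{z=z_0\}$, and $B\setminus\sigma$ is connected.

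The gap is in the final step, which you yourself flag. Failure of local pseudoconvexity at $p_0$ does \emph{not} produce a polydisc $W\ni p_0$ to which every holomorphic function on $\tilde\Om$ extends in the sense you use it. What the continuity theorem yields is a Hartogs figure: a polydisc $W$ (contained in a prescribed neighbourhood of $p_0$, but not necessarily containing $p_0$ itself) and, for each $g\in\O(\tilde\Om)$, a function $G\in\O(W)$ agreeing with $g$ only on \emph{one component} of $W\cap\tilde\Om$, namely the one containing the small Hartogs subdomain. Since $W\cap\tilde\Om$ can be disconnected (the graph of $v$ may have a deep narrow trench inside the base of $W$), $G$ need not agree with $\tilde f$ on the part of $W\cap\tilde\Om$ lying under the box you wish to attach, so ``every $f\in\O(\Om)$ extends to the box contained in $\tilde\Om\cup W$'' does not follow as written. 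This monodromy issue is precisely what the paper's proof of Theorem \ref{Trepreau1} is engineered to handle: it states the obstruction as ``every function on $Z\cap U$ agrees \emph{on some subdomain} $W$ of $Z\cap V$ with a function on $V$,'' passes to the \emph{first} boundary point $b$ of $Z$ along the leaf, and uses that $Z$ is locally connected at $b$ (via a neighbourhood $T$ with $Z\cap T$ connected) to make the attachment unambiguous. Your argument can be repaired with ingredients you already have: every boundary point of $\tilde\Om$ in $X$ is a graph point, and at a graph point lower semicontinuity of $v$ provides small boxes $B$ with $B\cap\tilde\Om$ connected; taking such a $B$ around a boundary point $a$ (in $W$) of the distinguished component of $W\cap\tilde\Om$, one gets $B\cap\tilde\Om$ contained in that component, hence $G=\tilde f$ there, and the box-attachment carried out at $a$ rather than at $p_0$ contradicts the definition of $v(a)$. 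But as written the step overstates what the continuity theorem delivers, and the gluing on $\tilde\Om\cup W$ is not justified.
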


In \cite{Trepreau}, $u$ is taken to be $C^2$, but this is not necessary.  We will not recall the proof of the schlichtness lemma.  It is subsumed by Theorem \ref{Trepreau1} below.  The hypothesis $|u|<r/3$ is used in \cite{Trepreau}, but Theorem \ref{Trepreau1} shows that it is not necessary either.

Now let $\Om$ be an open subset of a complex manifold $X$.  We say (by a slight abuse of terminology) that $\Om$ is {\it locally schlicht} at a boundary point $p$ of $\Om$ in $X$ if $p$ has a basis of open neighbourhoods $U$ such that:
\begin{enumerate}
\item $U$ is connected and Stein, that is, $U$ is a domain of holomorphy,
\item there is a Stein open set $V$ with $\Om\cap U\subset V\subset U$, and
\item every holomorphic function on $\Om\cap U$ extends to a holomorphic function on $V$.
\end{enumerate}
Then $V$ is the envelope of holomorphy of $\Om\cap U$.  Note that $\Om\cap U$ and $V$ may be disconnected.  Note also that if $\Om$ is Stein, then $\Om$ is locally schlicht at each of its boundary points.  More generally, if $\Om$ is pseudoconvex at a boundary point $p$, meaning that there is an open neighbourhood $W$ of $p$ such that $\Om\cap W$ is Stein, then $\Om$ is locally schlicht at $p$.

It follows from the schlichtness lemma that if $\Om$ is the subgraph of a lower semicontinuous function in some holomorphic coordinates at $p$, and this function is continuous at $p$, then $\Om$ is locally schlicht at $p$.  In particular, if the boundary of $\Om$ is smooth at $p$, then $\Om$ is locally schlicht at $p$.

\begin{corollary}  Let $u:\C^{n-1}\times\R\to\R$ be a lower semicontinuous function.  If $u$ is continuous at $(a,s)\in \C^{n-1}\times\R$, then the domain
$$\Om=\{(z,x+iy)\in\C^n:y<u(z,x)\}$$
is locally schlicht at the boundary point $(a,s+iu(a,s))$.
\label{subgraph}
\end{corollary}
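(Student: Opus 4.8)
The plan is to reduce by a translation to the case where the boundary point is the origin and $u(0)=0$, and then to apply the schlichtness lemma in the form of Theorem \ref{Trepreau1} (where the normalization $|u|<r/3$ is no longer needed) to a neighbourhood basis of $0$ consisting of boxes that are much thinner in the $y$-direction than in the $(z,x)$-directions. Concretely, I would first replace $u$ by the function $(z,x)\mapsto u(z+a,x+s)-u(a,s)$, which amounts to composing with a translation of $\C^n$; this carries $\Om$ to a domain of the same form and the boundary point $(a,s+iu(a,s))$ to $0$, and replaces $u$ by a lower semicontinuous function that is continuous at $0$ with $u(0)=0$. So it suffices to show that $\Om=\{y<u(z,x)\}$ is locally schlicht at $0$ under these hypotheses.

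For small $\epsilon>0$ let $m(\epsilon)=\sup\{|u(z,x)|:|(z,x)|\le\epsilon\}$; this is finite and, by continuity of $u$ at $0$, tends to $0$ as $\epsilon\to0$. Put $h(\epsilon)=m(\epsilon)+\epsilon$ and
$$U_\epsilon=\{(z,x+iy)\in\C^n:|(z,x)|<\epsilon,\ |y|<h(\epsilon)\}.$$
Each $U_\epsilon$ is a polydisc, hence a connected domain of holomorphy, and since $\epsilon\to0$ and $h(\epsilon)\to0$, the family $\{U_\epsilon\}$ is a basis of open neighbourhoods of $0$. I would foliate $U_\epsilon$ by the line segments along which $z$ and $x$ are constant; writing the last coordinate of $\C^n$ as $w=x+iy$, this foliation is induced by the nowhere-vanishing holomorphic vector field $i\,\partial/\partial w$, so it has a holomorphic atlas and therefore, by the Main Theorem, is quasiholomorphic. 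Since $|u|<h(\epsilon)$ on the base $\{|(z,x)|<\epsilon\}$, the set $\Om\cap U_\epsilon=\{(z,x+iy)\in U_\epsilon:y<u(z,x)\}$ meets each leaf in the nonempty interval with endpoints $-h(\epsilon)$ and $u(z,x)$, and a routine slide-along-the-bottom argument (using that any point of $\Om\cap U_\epsilon$ with $y<-m(\epsilon)$ lies below the graph of $u$ over the whole base) shows that $\Om\cap U_\epsilon$ is connected. Thus $\Om\cap U_\epsilon$ is an interval domain in the Stein manifold $U_\epsilon$ with respect to a quasiholomorphic foliation, so Theorem \ref{Trepreau1} gives a Stein domain $V$ with $\Om\cap U_\epsilon\subset V\subset U_\epsilon$ to which all holomorphic functions on $\Om\cap U_\epsilon$ extend, $V$ being the envelope of holomorphy of $\Om\cap U_\epsilon$. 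Hence $U_\epsilon$ and $V$ satisfy conditions (1)--(3) in the definition of local schlichtness for every small $\epsilon>0$, and $\Om$ is locally schlicht at $0$.

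The point that needs care is why the boxes must be thin. If one used cubes centred at $0$, then, since $u$ need not have any modulus of continuity at $0$ better than mere continuity, $\Om$ restricted to such a cube could miss some of the vertical leaves, so $\Om\cap U$ would fail to be an interval domain and Theorem \ref{Trepreau1} could not be invoked. Lowering the height to $h(\epsilon)=m(\epsilon)+\epsilon$ repairs this while still producing a neighbourhood basis of $0$, and checking that $\Om\cap U_\epsilon$ is then a genuine (nonempty, connected) interval domain is routine.
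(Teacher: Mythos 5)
Your proof is correct and follows essentially the same route as the paper: translate the boundary point to the origin, use continuity of $u$ at $0$ to build a neighbourhood basis of Stein boxes on which $|u|$ is strictly smaller than the box height (so that $\Om\cap U$ is an interval domain for the vertical foliation), and invoke the schlichtness lemma. The only difference is cosmetic -- the paper fixes the height at $3\epsilon$ and shrinks the base radius to $\delta_\epsilon<\epsilon$, while you fix the base radius at $\epsilon$ and shrink the height to $m(\epsilon)+\epsilon$ -- and your explicit connectedness check, though not required by the definition of local schlichtness, is exactly what is needed to apply Theorem~\ref{Trepreau1} as stated.
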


\begin{proof}  We may assume that $(a,s)$ is the origin and $u(a,s)=0$.  By assumption, for every $\epsilon>0$, there is $\delta_\epsilon>0$, say $\delta_\epsilon<\epsilon$, such that if $|w|<\delta_\epsilon$, then $|u(w)|<\epsilon$.

Fix $0<\epsilon<1$.  Let $W=\{w\in\C^{n-1}\times\R:|w|<\delta_\epsilon\}$ and 
$$U=\{(z,x+iy)\in\C^n:|(z,x)|<\delta_\epsilon, |y|<3\epsilon\}\cong W\times (-3\epsilon,3\epsilon).$$
We have $|u|<\epsilon$ on $W$.  The box $U$ is a domain of holomorphy.  As $\epsilon$ ranges through the interval $(0,1)$, these boxes form a neighbourhood basis for the origin.

Theorem \ref{Trepreau0} (or Theorem \ref{Trepreau1}, if you worry about $\delta_\epsilon$ and $3\epsilon$ not being equal) now implies that there is a domain of holomorphy $\tilde\Om$ with $\Om\cap U\subset \tilde\Om\subset U$ to which every holomorphic function on $\Om$ extends.
\end{proof}

The continuity assumption in the statement of Corollary \ref{subgraph} cannot be omitted.  Whether it could somehow be relaxed is an open question.  Namely, let $\Om_0$ be a domain in $\C^2$, let $u(z,x)$ equal $2$ if $z\in\Om_0$ and $0$ if $z\not\in\Om_0$, and let $\Om=\{(z,x+iy)\in\C^3:y<u(z,x)\}$.  If $p\in\partial\Om_0$, then $(p,i)\in\partial\Om$, and $\Om=\Om_0\times\C$ in a neighbourhood of $(p,i)$.  Thus, if $\Om_0$ is not locally schlicht at $p$, and as pointed out below, such domains exist, then $\Om$ is not locally schlicht at $(p,i)$.

\smallskip

We conclude this section with two examples of domains $\Om$ that fail to be locally schlicht at a boundary point $p$.  In the second example, $\Om$ is locally connected at $p$, in the first it is not.  Let $\Om$ be the union of the complement of the closed unit ball in $\C^n$ or its intersection with some open neighbourhood of $p=(1,0,\dots,0)$, the open ball of radius $\frac 1 2$ centred at $(\frac 1 2,0,\dots,0)$, and, to make $\Om$ connected, a fattened path, say, joining these two sets away from $p$.   Then $\Om$ is clearly not locally schlicht at $p$.

The second example is a modification of a classical example due to H.\ Cartan (see \cite{Cartan}; for more details see \cite{Narasimhan}, pp.\ 97--98).  Define a domain $\Om_0$ in $\C^2$ as $\Om_1\cup\Om_2$, where
$$\Om_1=\{(x+iy,w)\in\C^2 : -4<x<0,\ y>1,\ y|w| < e^x\},$$
$$\Om_2=\{(x+iy,w)\in\C^2: 0\leq x<4,\ y>1, \ e^{-1/x}<y|w|<1\}.$$
The envelope of holomorphy of $\Om_0$ is the domain $\Om_1\cup\widetilde \Om_2$, where 
$$\widetilde\Om_2=\{(x+iy,w)\in\C^2: 0\leq x<4,\ y>1,\ y|w|<1\}.$$
Let $\Om=\phi(\Om_0)\subset\C^2$, where $\phi(z,w)=(e^{iz},w)$. Note that $\phi:\Om_0\to\Om$ is a biholomorphism, the origin $0$  is a boundary point of the domain $\Om$, and $\Om$ is locally connected at $0$.  We claim that $\Om$ is not locally schlicht at $0$.  Namely, let $U$ be an open neighbourhood of $0$ and let $P$ be a polydisc centred at $0$ of polyradius $\big(\epsilon,\dfrac 1 {-\log\epsilon}\big)$ with $\epsilon>0$ so small that $P\subset U$.  Then
$$\{(z,w)\in \Om_0:y>-\log\epsilon\} \subset \phi^{-1}(U\cap \Omega),$$
so any holomorphic function on $U\cap \Om$ admits an analytic continuation along any path in
$$\phi\big(\{(z,w)\in \Om_1\cup\tilde\Om_2: y>-\ln\epsilon\}\big).$$
However, the holomorphic function $z\circ\phi^{-1}$ on $\Om$ (the first component of the inverse of $\phi$) does not extend to a single-valued holomorphic function on any Stein open set $V$ with $\Om\cap U\subset V\subset U$.  Therefore the envelope of holomorphy of $U\cap \Omega$ is not schlicht.

\section{Quasiholomorphic foliations and the global schlichtness lemma}

\noindent
We begin this section by defining two new notions that we need for our generalization of the schlichtness lemma.  Let $\mathcal F$ be a smooth 1-dimensional foliation of a complex manifold $X$.  The compact leaves of $\mathcal F$ are embedded circles and the noncompact leaves are injectively immersed lines (not necessarily embedded).  We say that $\mathcal F$ is {\it quasiholomorphic} if 
\begin{enumerate}
\item $\mathcal F$ has no compact leaves;
\item if $p,q\in X$ lie in the same leaf, then there is an open neighbourhood $U$ of $p$ in $X$ and a biholomorphism $h$ from $U$ onto an open neighbourhood of $q$, taking $p$ to $q$, such that $h(x)$ lies in the leaf of $x$ for all $x\in U$; and
\item if $a\in U$ and $b$ is a point in the leaf of $a$ between $a$ and $h(a)$, then there is a biholomorphism $k$ from $U$ onto an open neighbourhood of $b$, taking $a$ to $b$, such that $k(x)$ lies in the leaf of $x$ between $x$ and $h(x)$ for every $x\in U$.
\end{enumerate}

Under an additional regularity condition on the foliation, the biholomorphisms $h$ and $k$ above are uniquely determined as long as $U$ is connected.  Namely, let $X$ be an $n$-dimensional complex manifold with a smooth 1-dimensional foliation $\mathcal F$ with no compact leaves.  Then $\mathcal F$ has no holonomy since its leaves are simply connected, so its graph, as a set, is simply the set $G$ of pairs $(x,y)$ in $X\times X$ such that $x$ and $y$ lie in the same leaf.  For the definition of the graph of foliation and its basic properties, see \cite{Winkelnkemper}.  The graph has its own topology, possibly finer than the subspace topology induced from $X\times X$, and the structure of a smooth manifold of dimension $2n+1$.  The inclusion $\iota:G\hookrightarrow X\times X$ is a smooth injective immersion.  Suppose that $\iota$ is also proper, that is, an embedding; in other words, that $G$ is a smooth submanifold of $X\times X$.  If $U$ is an open subset of $X$, and $h:U\to X$ is a holomorphic map taking each leaf into itself, then the graph of $h$ is an $n$-dimensional complex submanifold of $U\times X$ and is a subset of $G$.  The tangent space to $G$ at each point has real dimension $2n+1$ and thus contains at most one complex subspace of complex dimension $n$.  If $U$ is connected, $h$ is therefore determined by its value at any one point.

Let $X$ be a smooth manifold, $A$ be a closed subset of $X$, and $\mathcal F$ be a smooth 1-dimensional foliation of $X\setminus A$ with no compact leaves.  By a domain in $X$ we mean, as usual, a nonempty, connected, open subset of $X$.  We call a domain $\Om$ in $X$ an {\it interval domain with respect to} $\mathcal F$, or simply an {\it interval domain} if $\mathcal F$ is understood, if $\Om$ contains $A$ and has a nonempty connected intersection with each leaf of $\mathcal F$.

Conditions (2) and (3) in the definition of quasiholomorphicity are designed to fit naturally into the proof of the following generalization of the schlichtness lemma.

\begin{theorem}[Global Schlichtness Lemma]  Let $\mathcal F$ be a quasiholomorphic foliation on an open subset of a complex manifold $X$, and let $Y$ be an interval domain in $X$ with respect to $\mathcal F$.  

There is a largest interval domain $Z$ in $X$ with respect to $\mathcal F$ containing $Y$ to which all holomorphic functions on $Y$ extend holomorphically.  Moreover, $Z$ is locally Stein in $X$.  

Hence, if $X$ is Stein, so is $Z$, so $Z$ is the envelope of holomorphy of $Y$.  In particular, if $X$ is Stein, then $Y$ has a schlicht envelope of holomorphy in $X$.
\label{Trepreau1}
\end{theorem}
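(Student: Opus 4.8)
The plan is to construct $Z$ directly as a union of maximal leaf segments and then prove it has the claimed properties. First I would fix, for each leaf $L$ of $\mathcal F$, the connected intersection $Y\cap L$, which is an open subinterval of the line $L$. Continuing holomorphic functions on $Y$ along $L$ past the endpoints of $Y\cap L$, I would define $Z\cap L$ to be the largest open subinterval of $L$ containing $Y\cap L$ such that every $f\in\mathcal O(Y)$ extends holomorphically along $Z\cap L$. The first task is to check that $Z:=\bigcup_L (Z\cap L)$ is open, connected, contains $Y$, is an interval domain, and that all of $\mathcal O(Y)$ genuinely extends to $\mathcal O(Z)$ (single-valuedly — here one uses that leaves are simply connected lines and that $Y\cap L$, hence $Z\cap L$, is connected, so there is no monodromy); and that $Z$ is the largest interval domain with this extension property. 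Openness of $Z$ and the simultaneous extendability of all functions is where one invokes quasiholomorphicity condition (2): near a point $p$ of a leaf and a point $q$ further along that leaf to which extension is possible, the leaf-preserving biholomorphism $h$ transports an extension across a whole neighbourhood, not just along the single leaf. This is essentially the content of the first paragraph of the proof alluded to in the statement.

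The heart of the matter is the \emph{local Steinness} of $Z$ in $X$. Let $p\in\partial Z\cap X$; I need a neighbourhood $U$ of $p$ in $X$ with $Z\cap U$ Stein. The point $p$ lies on a leaf $L$, and since $Z\cap L$ is a maximal interval of extendability, either $p$ is an endpoint of the interval $Z\cap L$, or $p$ lies outside the closure of $Z\cap L$ in $L$. In the latter case $p$ has a whole leaf-box neighbourhood missing $Z$, so $Z\cap U=\varnothing$ trivially; the real case is when $p$ is a boundary endpoint of $Z\cap L$ along the leaf. There I would pass to a holomorphic chart-like picture: using a point $a\in Z\cap L$ close to $p$ and the biholomorphism $h$ from a neighbourhood $U$ of $a$ onto a neighbourhood of $p$ given by condition (2), together with the ``in-between'' maps $k$ from condition (3), I can parametrize a leaf-box around $p$ so that $Z\cap U$ becomes the subgraph of a function measuring, along each nearby leaf, how far extension reaches. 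Conditions (2) and (3) are exactly what make this ``reach function'' well-defined and make the leaf-box into something biholomorphic to a product of a leaf-interval with a transversal, with $\mathcal F$ going to the straight-line foliation — i.e.\ locally we are in the situation of the original schlichtness lemma, Theorem \ref{Trepreau0}. The reach function is then lower semicontinuous by an elementary Hartogs-type/continuity-principle argument (a limit of extendable leaf-intervals is extendable, using condition (2) to spread the extension off the limiting leaf), and the schlichtness lemma identifies the subgraph of a lower semicontinuous function with an envelope of holomorphy, hence a Stein open set. Thus $Z\cap U$ is Stein.

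I expect the main obstacle to be precisely the reduction, in a neighbourhood of a boundary point $p\in\partial Z$, from the abstract foliated picture to the coordinate model of Theorem \ref{Trepreau0}. One has to simultaneously (a) choose the neighbourhood small enough that the biholomorphism $h$ of condition (2) and all the intermediate maps $k$ of condition (3) are defined and fit together coherently, (b) verify that the resulting identification really carries $\mathcal F$ to a straight-line foliation and carries $Z$ to an honest subgraph $\{y<v\}$ with $v$ lower semicontinuous, and (c) handle the bookkeeping of the ``reach'' measured in the leaf direction so that the monotonicity built into condition (3) translates into $Z\cap U$ being a full subgraph rather than some more complicated leaf-saturated set. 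The semilocality remark about quasiholomorphicity is what lets one carry out (a)–(c) one leaf at a time. Once local Steinness is in hand, the final two sentences are immediate: a locally Stein open subset of a Stein manifold is Stein (Docquier–Grauert), so $Z$ is Stein; it contains $Y$, all of $\mathcal O(Y)$ extends to it, and by maximality this extension is an isomorphism $\mathcal O(Z)\xrightarrow{\sim}\mathcal O(Y)$, so $Z$ is the envelope of holomorphy of $Y$, and it is schlicht by construction as a subdomain of $X$.
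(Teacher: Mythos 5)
Your overall architecture (construct the largest interval domain $Z$, prove it is locally Stein, then invoke Docquier--Grauert) matches the paper, and your first paragraph is essentially the paper's first paragraph (the paper takes $Z$ to be the union of all interval domains containing $Y$ to which every function extends, which sidesteps the single-valuedness worries of a purely leafwise construction). The genuine gap is in your treatment of local Steinness. You propose to use the maps $h$ and $k$ to build, near a boundary point $p$, a chart carrying $\mathcal F$ to the straight-line foliation and $Z$ to a subgraph $\{y<v\}$ with $v$ lower semicontinuous, and then to quote Theorem \ref{Trepreau0}. But quasiholomorphicity does not supply such a chart: conditions (2) and (3) only assert the \emph{existence} of a leaf-preserving biholomorphism for each individual pair of points, with no parametrization of the family of maps $k$ by a real or complex variable, no flow or group structure, and no continuity in the target point $b$. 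A holomorphic rectification of $\mathcal F$ is exactly the stronger hypothesis of Theorem \ref{holomorphicatlas} (``$\mathcal F$ has a holomorphic atlas''), and whether quasiholomorphicity implies that hypothesis is explicitly left open in the paper; moreover the intended logical order is the reverse of yours --- Theorem \ref{Trepreau0} is deduced from Theorem \ref{Trepreau1}, not used in its proof. Your case analysis at $p\in\partial Z$ is also incorrect: a boundary point of $Z$ need not be an endpoint of $Z\cap L_p$ in its own leaf $L_p$, nor have a leaf-box missing $Z$; it can be a limit of points of $Z$ lying on \emph{other} leaves while $Z\cap L_p$ stays far from $p$ along $L_p$. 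And since a leaf may re-enter any small neighbourhood many times, $Z\cap U$ need not be a subgraph over a transversal even near a genuine leaf-endpoint.

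The paper's actual argument avoids all of this. It argues by contradiction: if $Z$ is not Stein near $p$, then (by the standard characterization of failure to be a domain of holomorphy) there are domains $W\subset Z\cap V$ and $V\subset U$ with $V\not\subset Z$ such that every function on $Z\cap U$ agrees on $W$ with a function on $V$. Choosing $q\in Z$ on the leaf of $p$ and shrinking $U$ so that $h(U)\subset Z$, one takes a boundary point $a$ of $W$ in $V$ (so $a\in\partial Z$), observes that the interval $Z\cap L_a$ has an endpoint $b$ strictly between $a$ and $h(a)\in Z$, and uses the map $k$ of condition (3) to transport the triple $(U,V,W)$ to $(k(U),k(V),k(W))$ around $b$; the interval-domain property gives $k(Z\cap U)\subset Z$, so the forced extension now enlarges $Z$ across $b$ to a strictly larger interval domain, contradicting maximality. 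The isolated pointwise biholomorphisms suffice because they are only ever used to move one packet of ``forced extension'' data to one well-chosen leaf-endpoint --- no rectified model, no reach function, and no appeal to Theorem \ref{Trepreau0}. To salvage your route you would have to either prove the converse of Theorem \ref{holomorphicatlas} (open) or add the holomorphic-atlas hypothesis, which yields a strictly weaker theorem.
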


To say that $Z$ is locally Stein in $X$ means that $X$ has a cover by open subsets $U$ such that $Z\cap U$ is Stein.  Equivalently, every boundary point of $Z$ in $X$ has an open neighbourhood $V$ in $X$ such that $Z\cap V$ is Stein.

This result subsumes the schlichtness lemma.  Namely, Theorem \ref{Trepreau0} follows from Theorem \ref{Trepreau1} if we take $X$ to be a product of squares (so $X$ is biholomorphic to a polydisc) and $\mathcal F$ to be the foliation by line segments parallel to a side of one of the squares.

\begin{proof}  Let $\mathcal E$ be the set of all interval domains containing $Y$ to which all holomorphic functions on $Y$ extend.  Note that if $U,V\in\mathcal E$, then $U\cap V$ is connected.  Let $Z=\bigcup \mathcal E$.  Then $Z$ is an interval domain.  Let $f\in\O(Y)$.  For each $U\in\mathcal E$, there is $f_U\in\O(U)$ with $f_U|Y=f$.  For $U, V\in\mathcal E$, $f_U$ and $f_V$ agree on $Y$, so they agree on the connected set $U\cap V$.  Hence the functions $f_U$, $U\in\mathcal E$, define a holomorphic extension of $f$ to $Z$.  This shows that $Z$ is the largest element of $\mathcal E$.

It remains to prove that $Z$ is locally Stein.  Assume to the contrary that $Z$ has a boundary point $p$ such that $Z\cap U$ is not Stein for any open neighbourhood $U$ of $p$.  In other words, there is a basis of open Stein neighbourhoods $U$ of $p$ with a domain $V\subset U$, $V\not\subset Z$, and a domain $W$ in $Z\cap V\neq\varnothing$ such that every holomorphic function on $Z\cap U$ agrees on $W$ with a holomorphic function on $V$.  

Find $q\in Z$ in the leaf of $p$.  Let $U$ be an open neighbourhood of $p$ with a biholomorphism $h$ onto an open neighbourhood of $q$ as in the definition of quasiholomorphicity.  By shrinking $U$ if necessary, we may assume that $h(U)\subset Z$.  By assumption, there is a domain $V\subset U$, $V\not\subset Z$, and a domain $W$ in $Z\cap V\neq\varnothing$ such that every holomorphic function on $Z\cap U$ agrees on $W$ with a holomorphic function on $V$.  Since $V$ is connected, $W$ has a boundary point $a$ in $V$.  We may assume that $W$ is a connected component of $Z\cap V$.  Then $a$ is a boundary point of $Z$.  

Let $L$ be the leaf of $a$.  Since $a\notin Z$ and $h(a)\in Z$, the interval $Z\cap L$ in $L$ has a boundary point $b$ between $a$ and $h(a)$.  Then $b$ is a boundary point of $Z$.  Note that $Z$ is locally connected at $b$.  Let $k$ be a biholomorphism from $U$ onto an open neighbourhood of $b$ as in the definition of quasiholomorphicity.  If $x\in Z\cap U$, then $k(x)$ lies between $x$ and $h(x)\in Z$, so since $Z$ is an interval domain, $k(x)\in Z$.  Thus, $k(Z\cap U)\subset Z\cap k(U)$.

Every holomorphic function on $Z\cap U$ agrees on $W$ with a holomorphic function on $V$.  Hence every holomorphic function on $Z\cap k(U)\supset k(Z\cap U)$ agrees on $k(W)$ with a holomorphic function on $k(V)$.  Find an open neighbourhood $T\subset k(V)$ of $b$ such that $Z\cap T$ is connected.  Then every holomorphic function $f$ on $Z$ agrees on the nonempty open set $k(W)\cap T \subset Z\cap T$ with a holomorphic function on $T$, so $f$ extends holomorphically to $Z\cup T$.  Since $Z\cup T$ contains an interval domain strictly larger than $Z$ itself, this contradicts the definition of $Z$.
\end{proof}

Theorem \ref{Trepreau1} may be extended in various ways.  There are positive results on the Levi problem for manifolds that are not Stein.  For instance, if $\Om$ is a locally Stein domain in a complex projective space or, more generally, in a Grassmannian $\mathbb G$, then $\Om$ is Stein or $\Om=\mathbb G$ \cite{Ueda}.  Let $Y$ be a domain in $\mathbb G$ and suppose $Y$ is an interval domain with respect to a quasiholomorphic foliation on an open subset of $\mathbb G$.  Then, by Theorem \ref{Trepreau1}, either $Y$ has a Stein domain in $\mathbb G$ as its envelope of holomorphy, or all holomorphic functions on $Y$ are constant.

Also, Theorem \ref{Trepreau1} generalizes, with essentially the same proof, from the trivial line bundle $\O$ to any other natural holomorphic vector bundle $E$ on $X$, such as a holomorphic tensor bundle or a jet bundle.  We simply make the neighbourhood $U$ in the proof small enough that $E$ is trivial on $U$.  Then $E$ is also trivial on $k(U)$ by naturality.  We conclude that there is a largest interval domain $Z$ in $X$ containing $Y$ to which all holomorphic sections of $E$ over $Y$ extend holomorphically, and that $Z$ is locally Stein.

\smallskip

The principal example of a quasiholomorphic foliation is the foliation induced by a holomorphic vector field with no compact orbits.  The biholomorphisms $h$ and $k$ in the definition of quasiholomorphicity are then given by time-maps of the flow of the vector field.  Our next result gives a more general sufficient condition for quasiholomorphicity.  We do not know whether this condition is also necessary.

Let $\mathcal F$ be a smooth 1-dimensional foliation of an $n$-dimensional complex manifold $X$.  A chart for $\mathcal F$ is a diffeomorphism from an open subset $U$ of $X$ onto an open subset $V$ of $\C^n$ that takes $\mathcal F|U$ to the foliation of $V$ by the straight lines along which $\mathrm{Im}\,z_1$ and $z_2,\dots,z_n$ are constant.  An atlas for $\mathcal F$ is a set of charts whose domains cover $X$.  An atlas is called holomorphic if all the charts in it are holomorphic.  By the rectification theorem for holomorphic vector fields, $\mathcal F$ has a holomorphic atlas if and only if each point of $X$ has a neighbourhood on which $\mathcal F$ is induced by a nowhere-vanishing holomorphic vector field.  (If $\mathcal F$ has a holomorphic atlas, then $\mathcal F$ is a refinement of a holomorphic foliation of $X$ of complex dimension $1$ with another refinement orthogonal to $\mathcal F$.  We have not found a role for this additional structure here.)

\begin{theorem}  Let $\mathcal F$ be a smooth 1-dimensional foliation of a complex manifold $X$ with no compact leaves.  If $\mathcal F$ has a holomorphic atlas, then $\mathcal F$ is quasiholomorphic.
\label{holomorphicatlas}
\end{theorem}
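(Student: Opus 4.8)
The plan is to verify conditions (2) and (3) of quasiholomorphicity, since condition (1) is part of the hypothesis. The crucial observation is local: in a holomorphic chart carrying $\mathcal F$ to the foliation of an open subset of $\C^n$ by the lines along which $\mathrm{Im}\,z_1$ and $z_2,\dots,z_n$ are constant, the real translation $(z_1,z_2,\dots,z_n)\mapsto(z_1+t,z_2,\dots,z_n)$, $t\in\R$, is a biholomorphism carrying each leaf into itself. These \emph{leaf translations} (which, in the case of a holomorphic vector field, are the time-maps of its flow) will serve as the building blocks for $h$ and $k$. Before starting, I would refine the given holomorphic atlas so that each chart domain is a polydisc in chart coordinates; then each leaf meets each chart domain in a possibly empty connected segment.

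For condition (2), suppose $p$ and $q$ lie on a common leaf $L$. Since $L$ is a noncompact leaf, it is an injectively immersed copy of $\R$, so the compact sub-arc $\gamma$ of $L$ from $p$ to $q$ is unambiguous (and if $p=q$ we simply take $h=\mathrm{id}$). Cover $\gamma$ by finitely many of the refined charts so that each contains a connected sub-arc of $\gamma$ and the sub-arcs overlap consecutively, giving chart domains $V_1\ni p,\dots,V_m\ni q$ in order. Fix an orientation of $L$; replacing $z_1$ by $-z_1$ in a chart when necessary (a holomorphic change preserving the required normal form), I would arrange that the chosen direction along $L$ corresponds to increasing $\mathrm{Re}\,z_1$ in every $V_j$. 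Pick points $p=p_0,p_1,\dots,p_m=q$ along $\gamma$ with $p_{j-1}$ and $p_j$ on the sub-arc inside $V_j$, and let $h_j$ be the leaf translation in $V_j$ taking $p_{j-1}$ to $p_j$, necessarily forward, by a positive amount $t_j$. Shrinking toward $p$, the composition $h:=h_m\circ\cdots\circ h_1$ is defined on an open neighbourhood $U$ of $p$, is a biholomorphism onto an open neighbourhood of $q$, takes $p$ to $q$, and carries each leaf into itself because every $h_j$ does.

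For condition (3), fix $a\in U$ and a point $b$ on the leaf of $a$ between $a$ and $h(a)$. Running the leaf translations with a continuous time parameter --- translate forward by $t_1$ in $V_1$, then by $t_2$ in $V_2$, and so on --- gives a path from $a$ to $h(a)$ inside the leaf of $a$. Because the charts along $\gamma$ are coherently oriented and $U$ was shrunk toward $p$, this path is monotone along the leaf, so its image is exactly the sub-arc from $a$ to $h(a)$; hence $b$ is reached at an intermediate stage, after $h_1,\dots,h_{j_0-1}$ followed by a partial leaf translation in $V_{j_0}$ by some $\tau\in[0,t_{j_0}]$. Let $k$ be that partial composition. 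Then $k$ is a biholomorphism onto an open neighbourhood of $b$ with $k(a)=b$, and by the same monotonicity, applied now with $x$ in place of $a$, $k(x)$ lies in the leaf of $x$ between $x$ and $h(x)$ for every $x\in U$.

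The conceptual content is light: once one notices that real translations in the $z_1$-direction are holomorphic, conditions (2) and (3) reduce to composing, and then truncating, such translations. I expect the main obstacle to be the bookkeeping --- choosing the chain of charts and shrinking $U$ so that the iterated images $h_1(x),h_2(h_1(x)),\dots$ remain in the successive chart domains, and normalizing the orientations so that the intermediate paths are monotone, which is precisely what makes the word ``between'' in condition (3) come out right.
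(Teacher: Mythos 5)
Your proof is correct and is essentially the paper's argument: the paper composes time-maps of the flows of local nowhere-vanishing holomorphic vector fields along a chain of neighbourhoods covering the arc from $p$ to $q$ --- which, via the rectification theorem, are exactly your leaf translations in foliation charts --- and it likewise obtains $k$ by truncating the composition, using connectedness of consecutive overlaps (your orientation coherence, secured by shrinking $U$) to make the intermediate path injective. One small slip that does not affect anything: a leaf may meet a polydisc chart domain in several plaques rather than a single connected segment, but your argument only ever uses the plaque through each point.
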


\begin{proof}  We need to verify conditions (2) and (3) in the definition of quasiholomorphicity.  Suppose $p$ and $q$ lie in the same leaf $L$ of $\mathcal F$.  Cover the compact segment between $p$ and $q$ in $L$ by open sets $V_1,\dots,V_m$ such that the following hold.
\begin{enumerate}
\item[(a)]  $V_1$ is a neighbourhood of $p$, and $V_m$ is a neighbourhood of $q$.
\item[(b)]  $\mathcal F|V_j$ is induced by a nowhere-vanishing holomorphic vector field $F_j$ for $j=1,\dots,m$.
\item[(c)]  The intersection $V_j\cap V_{j+1}$ is connected for $j=1,\dots,m-1$.  To see that this is possible, take $V_1,\dots,V_m$ to be \lq\lq slices\rq\rq\ of a tubular neighbourhood of a relatively compact open segment $I$ in $L$ containing $p$ and $q$, viewing $I$ as a locally closed submanifold of $X$.  Thus we may assume that $F_j$ and $F_{j+1}$ are positive multiples of each other at each point of $V_j\cap V_{j+1}$.
\item[(d)]  There is an open neighbourhood $U$ of $p$ in $V_1$ such that the flow $\phi_t^1$ of $F_1$ is defined on $U$ for $t\in[0,t_1]$, and $U_1=\phi_{t_1}^1(U)\subset V_1\cap V_2$.
\item[(e)]  For $j=2,\dots,m-1$, the flow $\phi_t^j$ of $F_j$ is defined on $U_{j-1}$ for $t\in[0,t_j]$, and $U_j=\phi_{t_j}^j(U_{j-1})\subset V_j\cap V_{j+1}$.
\item[(f)]  The flow $\phi_t^m$ of $F_m$ is defined on $U_{m-1}$ for $t\in[0,t_m]$, and $U_m=\phi_{t_m}^m(U_{m-1})$ is a neighbourhood of $q$ in $V_m$.
\item[(g)]  $\phi_{t_m}^m\circ\dots\circ\phi_{t_1}^1(p)=q$.
\end{enumerate}
For $j=1,\dots,m$ and $t\in[t_1+\dots+t_{j-1}, t_1+\dots+t_j]$, we have a biholomorphism 
$$\psi_t=\phi_{t-t_1-\dots-t_{j-1}}^j\circ\phi_{t_{j-1}}^{j-1}\circ\dots\circ\phi_{t_1}^1$$ 
defined on $U$.  Let $\tau=t_1+\dots+t_m$.  For each $a\in U$, the path $t\mapsto\psi_t(a)$, $t\in[0,\tau]$, in the leaf of $a$ is continuous and, by (c) above, injective.

Now $h=\psi_\tau:U\to U_m$ is a biholomorphism with $h(p)=q$ such that $h(x)$ lies in the leaf of $x$ for all $x\in U$, so condition (2) is verified.  As for condition (3), say $a\in U$ and $b$ is in the leaf of $a$ between $a$ and $h(a)$.  There is $s\in[0,\tau]$ such that $b=\psi_s(a)$.  Then $k=\psi_s$ is a biholomorphism from $U$ onto an open neighbourhood of $b$, taking $a$ to $b$, such that $k(x)$ lies in the leaf of $x$ between $x$ and $h(x)$ for every $x\in U$.
\end{proof}

We end this section by stating a global schlichtness lemma for vector fields.  Let $X$ be a complex manifold, $A$ be a closed subset of $X$, and $F$ be a smooth vector field on $X\setminus A$ with no compact orbits, so in particular, $F$ has no critical points.  Let us call $F$ {\it quasiholomorphic} if the foliation induced by $F$ on $X\setminus A$ is quasiholomorphic.  By Theorem~\ref{holomorphicatlas}, $F$ is quasiholomorphic if locally, $F$ is the product of a positive smooth function and a holomorphic vector field.  We call a domain $Y$ in $X$ an {\it interval domain with respect to} $F$, or simply an {\it interval domain} if $F$ is understood, if $Y$ contains $A$ and has a nonempty connected intersection with each orbit of $F$.

If $F$ is a quasiholomorphic vector field on an open subset of a complex manifold $X$ and $Y$ is an interval domain in $X$ with respect to $F$, it follows immediately from Theorem \ref{Trepreau1} that there is a largest interval domain $Z$ in $X$ with respect to $F$ containing $Y$ to which all holomorphic functions on $Y$ extend holomorphically, and $Z$ is locally Stein.

\section{Corollaries, examples, and counterexamples}

\noindent
Let $A$ be a nonsingular complex $n\times n$ matrix.  Applying Theorem \ref{Trepreau1} to the holomorphic vector field $F:z\mapsto Az$ on $\mathbb C^n$ yields the following generalization of the well-known fact that the envelope of holomorphy of a star-shaped domain in $\C^n$ is a star-shaped domain in $\C^n$.

\begin{corollary}  Let $A\in {\rm GL}(n,\C)$ and let $\Om$ be a domain in $\C^n$ such that the open set $\{t\in\R:e^{tA}z\in\Om\}$ is a nonempty interval for every $z\in\C^n$.  Then the envelope of holomorphy of $\Om$ is a Stein domain in $\C^n$ of the same kind. 
\label{linear}
\end{corollary}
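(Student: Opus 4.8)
The plan is to apply Theorem \ref{Trepreau1} with $X=\C^n$ (which is Stein), with the closed set removed from $X$ being the single critical point $\{0\}$ of the linear vector field $F:z\mapsto Az$, and with $\mathcal F$ the foliation of $\C^n\setminus\{0\}$ by the orbits of $F$. First I would record two consequences of the hypothesis. Taking $z=0$, the set $\{t\in\R:e^{tA}0\in\Om\}=\{t\in\R:0\in\Om\}$ is a nonempty interval, hence equals $\R$, so $0\in\Om$. Next, for $z\neq0$ the flow $t\mapsto e^{tA}z$ parametrizes the leaf $L$ through $z$ as $\R$ (or, in the periodic case, as $\R/T\Z$) with its leaf topology, so $\Om$ has a nonempty connected intersection with $L$ if and only if $\{t\in\R:e^{tA}z\in\Om\}$ is a nonempty interval; in the periodic case the latter even forces all of $L$ into $\Om$. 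Thus the hypothesis says precisely that $\Om$ is an interval domain in $\C^n$ with respect to $\mathcal F$.

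The foliation $\mathcal F$ is induced on $\C^n\setminus\{0\}$ by the holomorphic vector field $z\mapsto Az$, which is nowhere zero there, so $\mathcal F$ has a holomorphic atlas and, by Theorem \ref{holomorphicatlas} (or directly, taking the biholomorphisms $h$ and $k$ to be time-maps of the flow $e^{tA}$, as in the principal example of a quasiholomorphic foliation), it is quasiholomorphic --- provided it has no compact leaves. Granting this, Theorem \ref{Trepreau1} gives a largest interval domain $Z$ in $\C^n$ with respect to $\mathcal F$ containing $\Om$ to which every holomorphic function on $\Om$ extends holomorphically, and $Z$ is locally Stein in $\C^n$; since $\C^n$ is Stein, $Z$ is Stein, hence $Z$ is the envelope of holomorphy of $\Om$, and it is schlicht since it is a domain in $\C^n$. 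Unwinding the definition of interval domain exactly as in the previous paragraph, $Z$ being an interval domain with respect to $\mathcal F$ says precisely that $0\in Z$ and that $\{t\in\R:e^{tA}z\in Z\}$ is a nonempty interval for every $z\in\C^n$, i.e. that the envelope is a domain of the same kind.

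The one point requiring care --- and the main obstacle --- is that $F$ may have a compact (periodic) orbit, namely when $A$ has a purely imaginary eigenvalue; then $\mathcal F$ has compact leaves and Theorem \ref{Trepreau1} does not apply verbatim. This is resolved by the observation already used above: if $L$ is a periodic orbit of period $T$ and $z\in L$, then $\{t\in\R:e^{tA}z\in\Om\}$ is a $T$-periodic subset of $\R$ that is a nonempty interval, hence is all of $\R$, so $L\subset\Om$. Consequently $0$ and every compact leaf lie in $\Om$, hence in $Z$, so every boundary point of $Z$ in $\C^n$ lies on a non-compact leaf. Since quasiholomorphicity is semilocal and the proof of Theorem \ref{Trepreau1} invokes the maps $h$ and $k$ only near boundary points of $Z$ --- always on non-compact leaves, near which $\mathcal F$ is quasiholomorphic by the time-map construction --- that proof applies without change. (Alternatively, one may simply add to the statement the assumption that $A$ has no eigenvalue on the imaginary axis, which removes all compact orbits at the outset.)
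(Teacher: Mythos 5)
Your proposal is essentially the paper's intended argument: apply Theorem \ref{Trepreau1} to the holomorphic vector field $z\mapsto Az$, note that the hypothesis forces $0$ and every compact (periodic) orbit entirely into $\Om$, and translate ``interval domain'' back into the condition on $\{t\in\R:e^{tA}z\in\Om\}$. The one place you diverge is the treatment of compact orbits. The paper handles them through the mechanism already built into the statement of Theorem \ref{Trepreau1}: the theorem allows a closed set $A$ to be removed from $X$, with the foliation defined on $X\setminus A$ and interval domains required to contain $A$; one takes $A$ to be the union of the origin and all compact orbits, which works because (as the paper remarks, calling it a little exercise in linear algebra) that union is closed. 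You instead remove only $\{0\}$ and argue that the proof of Theorem \ref{Trepreau1} still goes through because every boundary point of $Z$ lies on a noncompact leaf. That patch is correct in substance, but it silently relies on the same closedness fact: to know that a small neighbourhood of a boundary point of $Z$ meets no compact leaves (so that ``between'' makes sense on the relevant leaves and the time-map construction of $h$ and $k$ applies there), you need the union of the compact orbits to be closed. Folding the compact orbits into the removed closed set, as the paper does, makes this explicit and lets you cite Theorem \ref{Trepreau1} verbatim rather than re-inspecting its proof; your parenthetical suggestion to assume $A$ has no purely imaginary eigenvalues would prove a weaker statement and should be dropped.
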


The assumption that $\{t\in\R:e^{tA}z\in\Om\}$ is a nonempty interval for every $z\in\C^n$ implies that $\Om$ contains all the compact orbits of $F$.  It is a little exercise in linear algebra to show that the union of these orbits is closed.

\smallskip

Next we look at a variant of Theorem \ref{Trepreau1} for the special case of a backwards complete holomorphic vector field.  In this case we are able to describe envelopes with respect to certain subfamilies of the set of all holomorphic functions.  Let $F$ be a smooth vector field on a smooth manifold $X$.  A domain $\Om$ in $X$ is called a {\it half-space with respect to} $F$ if $\Om$ intersects every orbit of $F$ in a backwards semiorbit.  It follows that $\Om$ contains the cycles and equilibria of $F$.  We say that $F$ is {\it backwards complete} if its flow is defined on all of $X$ for all negative time.  In other words, the maximal integral curves of $F$ are defined on intervals that are unbounded below.  A straightforward modification of the proof of Theorem \ref{Trepreau1} gives the following result.

\begin{theorem}  Let $F$ be a backwards complete holomorphic vector field on a complex manifold $X$ and let $Y$ be a half-space in $X$ with respect to $F$.  Let $\mathcal H\subset\O(Y)$ be nonempty and closed under precomposition by the backwards time-maps of $F$.  

There is a largest half-space $Z$ in $X$ with respect to $F$ containing $Y$ to which all holomorphic functions in $\mathcal H$ extend holomorphically.  Let $\widetilde{\mathcal H}\subset\O(Z)$ be the set of the extensions to $Z$ of the functions in $\mathcal H$.  There is no domain $U$ in $X$ with $U\not\subset Z$ and a domain $V$ in $Z\cap U\neq\varnothing$ such that every function in $\widetilde{\mathcal H}$ agrees on $V$ with a holomorphic function on $U$.  

Hence, $Y$ has a schlicht $\mathcal H$-envelope in $X$, and the $\mathcal H$-envelope is a half-space with respect to $F$.
\label{backwards}
\end{theorem}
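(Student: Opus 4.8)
The plan is to run the proof of Theorem~\ref{Trepreau1} almost verbatim, reading ``half-space'' for ``interval domain'' and ``$\mathcal H$'' for ``$\O(Y)$'', and replacing the leaf-preserving biholomorphisms $h$ and $k$ supplied by quasiholomorphicity with backwards time-maps $\varphi_t$, $t\le 0$, of the flow $\varphi$ of $F$; these are globally defined biholomorphisms of $X$ precisely because $F$ is backwards complete. Two features of the half-space setting make the argument, if anything, simpler than in Theorem~\ref{Trepreau1}: flowing backwards from a point of a half-space stays inside it, so the inclusion ``$k(Z\cap U)\subset Z\cap k(U)$'' of the earlier proof is now automatic and no auxiliary map $h$ nor any shrinking is needed; and the relevant time-maps automatically point backwards, since the trace of $Z$ on the orbit of a boundary point is a backwards semiorbit --- and it is exactly under backwards precomposition that $\mathcal H$ is assumed closed.

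First I would carry over the first paragraph of the proof of Theorem~\ref{Trepreau1} unchanged: an intersection of two half-spaces containing $Y$ is a half-space containing $Y$, hence connected, so the family $\mathcal E$ of half-spaces containing $Y$ to which every function in $\mathcal H$ extends has a largest element $Z=\bigcup\mathcal E$, got by gluing extensions over connected overlaps, and $\widetilde{\mathcal H}=\{\widetilde f:f\in\mathcal H\}\subset\O(Z)$ is the set of these extensions. The one genuinely new point, absent from Theorem~\ref{Trepreau1}, is that $\widetilde{\mathcal H}$ is again closed under precomposition by backwards time-maps: for $t\le 0$ we have $\varphi_t(Z)\subset Z$ and $\varphi_t(Y)\subset Y$ because $Z$ and $Y$ are half-spaces, so $\widetilde f\circ\varphi_t\in\O(Z)$ restricts on $Y$ to $f\circ\varphi_t\in\mathcal H$, whence $\widetilde f\circ\varphi_t\in\widetilde{\mathcal H}$. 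This is where the hypothesis on $\mathcal H$ enters, and it is what lets the transport step below be carried out inside $\widetilde{\mathcal H}$ instead of quantifying over all holomorphic functions as in Theorem~\ref{Trepreau1}.

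For the non-extension property, suppose to the contrary that there are a domain $U$ in $X$ with $U\not\subset Z$ and a domain $V$ in $Z\cap U\ne\varnothing$ such that every $\widetilde f\in\widetilde{\mathcal H}$ agrees on $V$ with a holomorphic function on $U$. Replacing $V$ by the connected component of $Z\cap U$ that contains it --- which preserves the hypothesis by analytic continuation along that component --- we may assume $V$ is such a component; then $V\ne U$, so, just as in the proof of Theorem~\ref{Trepreau1}, $V$ has a boundary point $a$ in $U$, and $a\in\partial Z$. Since $a\notin Z$ and half-spaces contain all cycles and equilibria, the orbit of $a$ is a line that meets $Z$ in a backwards semiorbit $\{\varphi_t(a):t<c\}$ with $c\le 0$; set $b:=\varphi_c(a)\in\partial Z$ and $k:=\varphi_c$. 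Then $k(a)=b$, and $k(Z\cap U)\subset Z\cap k(U)$ because $c\le 0$ and $Z$ is a half-space. Transporting through the biholomorphism $k$ exactly as in Theorem~\ref{Trepreau1}, and using $\widetilde f\circ k\in\widetilde{\mathcal H}$ for each $\widetilde f\in\widetilde{\mathcal H}$, one finds that every $\widetilde f\in\widetilde{\mathcal H}$ agrees on $k(V)$ with a holomorphic function on $k(U)$; since $b=k(a)\in\overline{k(V)}$ and $Z$ is locally connected at $b$, choosing a neighbourhood $T\subset k(U)$ of $b$ with $Z\cap T$ connected extends every $\widetilde f\in\widetilde{\mathcal H}$, hence every $f\in\mathcal H$, holomorphically to $Z\cup T$. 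Finally I would invoke the elementary fact that any open set $D$ containing a half-space $Z$ contains a largest half-space $Z^D$ with $Z\subset Z^D\subset D$, namely the set of points of $D$ whose backward orbit stays in $D$ and eventually meets $Z$ (openness of $Z^D$ follows from a tube-lemma argument on the compact backward segment from such a point to $Z$). Since the orbit of $b$ meets $Z$ in $\{\varphi_t(b):t<0\}$ and $b\in T$, we get $b\in Z^{Z\cup T}$, so $Z\subsetneq Z^{Z\cup T}\subset Z\cup T$ is a half-space to which all of $\mathcal H$ extends, contradicting the maximality of $Z$. The concluding sentence of the theorem then just rephrases this: the non-extension property is the completeness condition making $Z$ the schlicht $\mathcal H$-envelope of $Y$ --- it plays for the $\mathcal H$-envelope the role local Steinness plays for the ordinary envelope in Theorem~\ref{Trepreau1} --- and $Z$ is a half-space by construction.

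The step I expect to need the most care is not conceptual but a matter of discipline: checking that every function fed into the hypothesis after applying $k$ really lies in $\widetilde{\mathcal H}$, so that the closure hypothesis on $\mathcal H$, together with the fact --- forced by $Z$ being a half-space --- that all the time-maps involved point backwards, genuinely does the work that ``all holomorphic functions'' did in Theorem~\ref{Trepreau1}. The construction of $Z^D$ and the check that it is an open half-space are routine and could be isolated as a preliminary remark.
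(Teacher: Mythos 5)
The paper offers no written proof of Theorem~\ref{backwards} beyond the remark that it follows from a straightforward modification of the proof of Theorem~\ref{Trepreau1}, and your proposal is exactly that modification, correctly carried out: you identify the one genuinely new point (that $\widetilde{\mathcal H}$ inherits closure under backwards precomposition, which is what lets the transport by $k=\varphi_c$ stay inside $\widetilde{\mathcal H}$), and you supply the routine construction of the largest half-space inside $Z\cup T$. This matches the paper's intended argument, so there is nothing further to add.
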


The hypothesis that $\mathcal H$ be closed under precomposition by the backwards time-maps of $F$ cannot be omitted.  As a simple example, let $X=\C$, $F=-\partial/\partial x$, $Y=\C\setminus(-\infty,0]$, and $\mathcal H=\{f\}$, where $f$ is the branch of the square root on $Y$ that maps the positive real axis to itself.  Then $Y$ itself is the largest half-space in $X$ with respect to $F$ containing $Y$ to which $f$ extends holomorphically, but $Y$ is not its own $\mathcal H$-envelope: this is the domain $\C\setminus\{0\}\to\C$, $z\mapsto z^2$, over $\C$.  On the other hand, $Y$ is its own envelope with respect to the set $\{f(\cdot+t):t\geq 0\}$, which is the smallest subset of $\O(Y)$ that contains $f$ and is closed under precomposition by the backwards time-maps of $F$. 

\smallskip
\noindent
{\bf A counterexample to a more general schlichtness lemma.}  We now present an example showing that our global schlichtness lemma does not hold for real-analytic vector fields, or even for polynomial vector fields on $\C^n$, if the assumption of quasiholomorphicity is omitted. 

Consider the vector field $F_0=\dfrac\partial{\partial x_1}+(3x_1^2-1)\dfrac\partial{\partial x_2}$ on $\R^2$.  The integral curve $\gamma$ of $F_0$ with $\gamma(0)=(0,c)$, $c\in\R$, is $\gamma(t)=(t,t^3-t+c)$.  Let $\alpha=1/\sqrt{3}$ and 
$$Y_0=\{(x_1,x_2)\in\R^2:x_2<\frac 2 3 \alpha \text{ and if }-\alpha<x_1<2\alpha, \text{ then }x_2<x_1^3-x_1\}.$$
Then $Y_0$ is an interval domain with respect to $F_0$, but its convex hull $Z_0$ is not.

\begin{figure}[h]
{\resizebox{4cm}{5cm}{\includegraphics{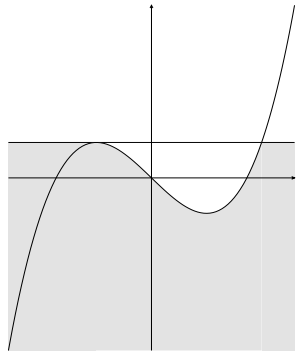}}}
\caption{The shaded domain in the plane $\R^2$ is $Y_0$.}
\end{figure}

Let $F$ be the extension of $F_0$ to $\C^2=\R^2+i\R^2$ that is independent of the imaginary coordinates.  Then the tube domain $Y=Y_0+i\R^2$ is an interval domain with respect to $F$, but its envelope of holomorphy $Z_0+i\R^2$ is not.  In fact, the largest interval domain with respect to $F$ containing $Y$ to which all holomorphic functions on $Y$ extend is just $Y$ itself.  It follows that $F$ is not quasiholomorphic.  It also follows that $Y$ is not an interval domain with respect to the holomorphic extension
$$\dfrac\partial{\partial x_1}+(3x_1^2-3y_1^2-1)\dfrac\partial{\partial x_2}+6x_1y_1\dfrac\partial{\partial y_2}$$
of $F_0$ to $\C^2$.

\smallskip

The following two propositions shed light on the notion of quasiholomorphicity in the 1-dimensional case.  An antiholomorphic vector field on a domain $\Om$ in $\C$ is a vector field of the form $F=\bar g \dfrac\partial{\partial z}$, where $g$ is a holomorphic function on $\Om$.  Such a field is not holomorphic, of course, unless $g$ is constant.  It is easily seen that the antiholomorphic vector fields are precisely the fields that are locally of the form $F=\nabla u$, where $u$ is harmonic.  Then $u$ is strictly increasing along the nontrivial orbits of $F$.  If $\Om$ is simply connected, then $F$ has a potential $u$ defined on all of $\Om$, so $F$ has no cycles.

\begin{proposition}  An antiholomorphic vector field on a domain $\Om$ in $\C$ is quasiholomorphic if and only if all its orbits are noncompact.  This holds for example if $\Om$ is simply connected.
\label{antiholomorphic}
\end{proposition}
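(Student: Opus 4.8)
The plan is to exploit the fact that, although an antiholomorphic vector field $F=\bar g\,\dfrac\partial{\partial z}$ is genuinely not holomorphic, it induces the \emph{same} foliation as a holomorphic vector field wherever $g$ does not vanish, and then to quote Theorem~\ref{holomorphicatlas}.

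First I would handle the easy direction. If $F$ is quasiholomorphic, then $g$ is nowhere zero: a zero of $g$ is a critical point of $F$, i.e.\ an equilibrium, hence a compact orbit, which is not allowed. Consequently the leaves of the foliation induced by $F$ are exactly the orbits of $F$, and since a quasiholomorphic foliation has no compact leaves, every orbit of $F$ is noncompact.

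For the converse, suppose every orbit of $F$ is noncompact. As above, $g$ must then be nowhere zero, since otherwise a zero of $g$ would give an equilibrium, a compact orbit. The key point is now the elementary identity $\bar g=|g|^2/g$ on $\Om$: because $g$ is a nowhere-vanishing holomorphic function, $1/g$ is holomorphic, so
$$F=\bar g\,\frac\partial{\partial z}=|g|^2\cdot\frac1g\,\frac\partial{\partial z}$$
exhibits $F$ as the product of the positive smooth function $|g|^2$ and the nowhere-vanishing holomorphic vector field $(1/g)\,\dfrac\partial{\partial z}$. In particular $F$ and $(1/g)\,\dfrac\partial{\partial z}$ induce the same foliation $\mathcal F$, which therefore has a holomorphic atlas by the rectification theorem. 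Since $F$ has no compact orbits, $\mathcal F$ has no compact leaves, so Theorem~\ref{holomorphicatlas} applies and shows that $\mathcal F$, equivalently $F$, is quasiholomorphic.

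Finally, for the last assertion, if $\Om$ is simply connected then the holomorphic function $g$ has a single-valued primitive $G$, so $u=\mathrm{Re}\,G$ is a harmonic potential for $F$ on all of $\Om$; as recalled just before the proposition, $u$ is strictly increasing along every nontrivial orbit, so $F$ has no cycle, and together with the absence of equilibria this gives that all orbits are noncompact. I expect the only real subtlety to be the bookkeeping in the ``only if'' direction---making sure a zero of $g$ genuinely obstructs quasiholomorphicity, via the standing convention that a quasiholomorphic vector field has no compact orbits, equilibria included---whereas the ``if'' direction reduces, through the one-line identity $\bar g=|g|^2/g$, to the already-established holomorphic-atlas criterion.
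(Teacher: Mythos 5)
Your proof is correct and follows essentially the same route as the paper: the whole argument rests on the identity $\bar g\,\partial/\partial z=|g|^2\cdot(1/g)\,\partial/\partial z$, which exhibits $F$ as a positive smooth multiple of a holomorphic vector field and lets Theorem~\ref{holomorphicatlas} apply. The extra bookkeeping you supply (zeros of $g$ giving compact orbits, and the harmonic potential ruling out cycles when $\Om$ is simply connected) is exactly what the paper relegates to the remarks preceding the proposition.
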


Note that all the orbits of the antiholomorphic field $\nabla\arg=\dfrac 1{x^2+y^2}(-y,x)$ on $\C\setminus\{0\}$ are compact.

\begin{proof}  The vector field $\bar g \dfrac\partial{\partial z}$, where $g$ is a holomorphic function on $\Om$ with no zeros, is the product of the positive smooth function $|g|^2$ and the holomorphic vector field $\dfrac 1 g \dfrac\partial{\partial z}$.
\end{proof}

The next result shows that quasiholomorphicity of vector fields is not preserved by addition.

\begin{proposition}  Let $a,b\neq 0$ be real numbers.  The vector field $F=ax\dfrac\partial{\partial x}+by\dfrac\partial{\partial y}$, that is, the real-linear vector field given by the diagonal matrix $\left[\begin{array}{cc} a & 0 \\ 0 & b \end{array} \right]$, is quasiholomorphic on $\C\setminus\{0\}$ if and only if $F$ is holomorphic or antiholomorphic, that is, $a=\pm b$.
\label{reallinear}
\end{proposition}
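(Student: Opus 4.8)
For the final statement (Proposition~\ref{reallinear}), I would prove the two implications separately; the forward direction is short, the reverse is the substance. For the forward direction: if $a=b$, the foliation of $F$ on $\C\setminus\{0\}$ is the one induced by the nowhere-vanishing holomorphic vector field $z\mapsto az$, whose orbits are open rays from the origin and hence noncompact, so the foliation has a holomorphic atlas and is quasiholomorphic by Theorem~\ref{holomorphicatlas}. If $a=-b$, then $F=\nabla u$ with $u=\tfrac12 a\,\mathrm{Re}(z^2)$ harmonic, so $F$ is an antiholomorphic vector field; its orbits are the connected components of the level sets of the conjugate harmonic function $axy$, namely the branches of the hyperbolas $xy=\mathrm{const}$ and the four open coordinate half-axes, all noncompact, so $F$ is quasiholomorphic by Proposition~\ref{antiholomorphic}.

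For the reverse direction I would assume $a\neq\pm b$ and work in the open first quadrant $Q=\{\,\mathrm{Re}\,z>0,\ \mathrm{Im}\,z>0\,\}$, which the flow $\phi_t(x+iy)=e^{at}x+ie^{bt}y$ preserves and in which the orbits of $F$ are exactly the level curves of the real-analytic submersion $G=x^{-b}y^{a}$. The main device is the conformally covariant quantity $\kappa:=\Delta\log G/|\nabla\log G|^2$, where $\Delta,\nabla$ are Euclidean in $(x,y)$ and $\nabla\log G=(-b/x,a/y)$ never vanishes on $Q$. The point is that $\kappa$ is preserved by every leaf-preserving holomorphic map: if $h$ is holomorphic on a subdomain $U$ of $Q$ and sends each leaf into itself, then $G\circ h=G$, hence $\log G\circ h=\log G$, and the two-dimensional conformal covariance $\Delta(v\circ h)=(|h'|^2\,\Delta v)\circ h$ and $|\nabla(v\circ h)|^2=(|h'|^2\,|\nabla v|^2)\circ h$ for holomorphic $h$ yields, on dividing, $\kappa\circ h=\kappa$ on $U$. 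A short computation gives $\kappa(x+iy)=\dfrac{b\,y^2-a\,x^2}{a^2x^2+b^2y^2}$, which depends only on $\eta=\arg z\in(0,\pi/2)$; in fact $\kappa=\lambda(\eta):=\dfrac{b\sin^2\eta-a\cos^2\eta}{a^2\cos^2\eta+b^2\sin^2\eta}$, and since $\lambda(\eta)\to-1/a$ as $\eta\to0^+$ while $\lambda(\eta)\to1/b$ as $\eta\to(\pi/2)^-$, with $-1/a\neq1/b$ because $a+b\neq0$, the smooth function $\lambda$ is nonconstant on $(0,\pi/2)$. Hence $\lambda'(\eta_0)\neq0$ for some $\eta_0$, and $\lambda$ is injective on an interval $J=(\eta_0-\delta,\eta_0+\delta)\subset(0,\pi/2)$; put $p_0=e^{i\eta_0}\in Q$.

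For the rigidity, note that along the leaf of $p_0$ one has $\arg\phi_t(p_0)=\arctan\!\big(e^{(b-a)t}\tan\eta_0\big)$, which is strictly monotone in $t$ (as $b\neq a$) and equals $\eta_0$ only at $t=0$, so I may choose $q=\phi_{t_*}(p_0)$ with $0<|t_*|$ so small that $q\in Q$ and $\arg q\in J$. Assuming $F$ quasiholomorphic, condition (2) of the definition supplies an open connected $U\ni p_0$ and a biholomorphism $h\colon U\to h(U)$ with $h(p_0)=q$ sending each leaf into itself; shrinking $U$ about $p_0$ and using that $h$ is continuous with $q$ already close to $p_0$, I may assume $U\subset Q$ and $\arg z,\ \arg h(z)\in J$ for all $z\in U$ (then $h(U)\subset Q$ automatically, since a leaf meeting $Q$ stays in $Q$). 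Now $\kappa\circ h=\kappa$ gives $\lambda(\arg h(z))=\lambda(\arg z)$, so injectivity of $\lambda$ on $J$ forces $\arg h(z)=\arg z$ on $U$; thus $h(z)=\rho(z)\,z$ with $\rho=|h|/|z|>0$, and applying $\partial/\partial\bar z$ and using $z\neq0$ shows $\rho$ is holomorphic, hence (being real) a constant $\rho_0>0$. Then $G(\rho_0 z)=\rho_0^{\,a-b}G(z)$, so $G\circ h=G$ forces $\rho_0^{\,a-b}=1$, hence $\rho_0=1$ since $a\neq b$, hence $q=h(p_0)=p_0$, contradicting $t_*\neq0$. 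Therefore $F$ is not quasiholomorphic when $a\neq\pm b$.

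The step I expect to be the main obstacle is the localization just used: one must pick $q$ on the leaf near $p_0$ first and only then shrink $U$, so that the arguments of $z$ and of $h(z)$ are both trapped in the interval $J$ on which $\lambda$ is injective. This is nothing more than continuity of $h$, but it is exactly what turns the mere nonconstancy of $\lambda$ into the rigid conclusion $h(z)=\rho_0 z$; without it the equation $\lambda(\arg h(z))=\lambda(\arg z)$ need not force $\arg h(z)=\arg z$. Note that condition (3) of quasiholomorphicity plays no role here — condition (2) alone gives the contradiction.
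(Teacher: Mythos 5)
Your proof is correct and follows essentially the same route as the paper: your invariant $\kappa=\Delta\log G/|\nabla\log G|^2$ for the conserved quantity $\log G=-b\log x+a\log y$ is exactly the quantity $(by^2-ax^2)/(a^2x^2+b^2y^2)$ used there, with your conformal-covariance derivation of $\kappa\circ h=\kappa$ replacing the paper's direct differentiation of $\alpha\circ h=\alpha$. You additionally carry out in detail the verification, left as an exercise in the paper, that this quantity is not constant on orbits unless $a=\pm b$ (your argument via local injectivity of $\lambda(\arg z)$ is more elaborate than necessary, since $\lambda(\arg q)\neq\lambda(\arg p_0)$ already yields the contradiction), and you spell out the easy \lq\lq if\rq\rq\ direction, which the paper leaves to the preceding discussion of holomorphic and antiholomorphic fields.
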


\begin{proof}  We start by proving a general fact for an arbitrary smooth vector field $F$ on a domain $\Om$ in $\C$.  Suppose there is a holomorphic map $h:U\to V$ between open subsets of $\Om$ such that $h(x)$ lies in the $F$-orbit of $x$ for all $x\in U$.  Let $\alpha:U\cup V\to\R$ be a smooth function with no critical points which is constant on the intersection of each $F$-orbit with $U\cup V$.  Then $\alpha\circ h=\alpha$ on $U$.  Differentiating this equation with respect to $z$ gives $(\alpha_z\circ h)h'=\alpha_z$, so 
$$0=(h')_{\bar z}=\bigg(\dfrac{\alpha_z}{\alpha_z\circ h}\bigg)_{\bar z} = \frac{\alpha_{z\bar z}(\alpha_z\circ h)-\alpha_z(\alpha_{z\bar z}\circ h)\overline{h'}}{(\alpha_z\circ h)^2}.$$
Substituting $\overline{h'}=\dfrac{\alpha_{\bar z}}{\alpha_{\bar z}\circ h}$ yields
$$\frac{\alpha_{z\bar z}}{|\alpha_z|^2}\circ h = \frac{\alpha_{z\bar z}}{|\alpha_z|^2}$$
on $U$.  In summary, the existence of a local holomorphic map $h$ taking each $F$-orbit into itself implies that if $\alpha$ is a conserved quantity for $F$, then so is $\Delta\alpha/|\nabla\alpha|^2$.

All orbits of $F=ax\dfrac\partial{\partial x}+by\dfrac\partial{\partial y}$ (except the equilibrium at $0$) are noncompact: they are the images of the integral curves $t\mapsto (e^{at}x, e^{bt}y)$ for $(x,y)\in\R^2\setminus\{0\}$.  We can take $\alpha(x,y)=-b\log x+a\log y$ as a conserved quantity for $F$, in the first quadrant, say.  Then
$$\frac{\Delta\alpha}{|\nabla\alpha|^2} = \frac{by^2-ax^2}{b^2y^2+a^2x^2}.$$
We leave it as an exercise for the reader to show that this quantity is not constant on the orbits of $F$ unless $a=\pm b$.  This shows that $F$ is not quasiholomorphic unless $a=\pm b$.
\end{proof}

We are grateful to the referee for suggesting how to continue the above calculation so as to prove the converse of Theorem \ref{holomorphicatlas} in a special case.

\begin{proposition}  Let $\mathcal F$ be a smooth 1-dimensional foliation of a Riemann surface $X$ satisfying the following two conditions.
\begin{enumerate}
\item[(a)]  If $p,q\in X$ lie in the same leaf, then there is an open neighbourhood $U$ of $p$ in $X$ and a biholomorphism $h$ from $U$ onto an open neighbourhood of $q$, taking $p$ to $q$, such that $h(x)$ lies in the leaf of $x$ for all $x\in U$.  (This is part {\rm (2)} of our definition of quasiholomorphicity.)
\item[(b)]  There is a cover of $X$ by open subsets $U$ with submersions $\alpha:U\to\R$, such that the nonempty intersections with $U$ of the leaves of $\mathcal F$ are precisely the fibres of $\alpha$.
\end{enumerate}
Then $\mathcal F$ has a holomorphic atlas.
\end{proposition}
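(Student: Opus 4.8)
The plan is to show that $\mathcal F$ is, near every point, the foliation by the level sets of a harmonic submersion with nowhere-vanishing gradient; such a submersion is locally the imaginary part of a holomorphic local biholomorphism, which then serves as a holomorphic chart for $\mathcal F$. Fix $p\in X$ and choose a submersion $\alpha\colon U\to\R$ as in (b) with $p\in U$; shrinking $U$ to a connected neighbourhood of $p$ on which a holomorphic coordinate $z$ is defined (which does not disturb (b), since the fibres of the restricted submersion are the intersections of the original fibres with the smaller set), we may assume $U$ is such a coordinate chart. I look for the desired submersion in the form $u=\phi\circ\alpha$ with $\phi$ a diffeomorphism between intervals: the leaves in $U$ are the fibres of $\alpha$, hence also of $u$, and $\nabla u=(\phi'\circ\alpha)\,\nabla\alpha$ is nowhere zero as long as $\phi'$ never vanishes, so the only thing to arrange is that $\phi$ can be chosen making $\phi\circ\alpha$ harmonic.

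The key step is to prove that the function $F=\alpha_{z\bar z}/|\alpha_z|^2=\Delta\alpha/|\nabla\alpha|^2$ on $U$ is constant on the fibres of $\alpha$. Fix $x\in U$ and let $y$ be any point of the leaf of $x$ lying in $U$. By (a), applied to $x$ and $y$, there is a neighbourhood of $x$ and a biholomorphism $h$ of it onto a neighbourhood of $y$ with $h(x)=y$ and with $h(w)$ in the leaf of $w$ for every $w$ in the domain; shrinking the domain to an open set $U_0\ni x$ with $U_0\cup h(U_0)\subset U$, condition (b) gives $\alpha\circ h=\alpha$ on $U_0$ for free, because for $w\in U_0$ the point $h(w)$ lies in the leaf of $w$, and that leaf meets $U$ in exactly the fibre $\alpha^{-1}(\alpha(w))$. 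Now the computation in the proof of Proposition~\ref{reallinear} applies: under a holomorphic map, the Laplacian and the squared gradient of a real-valued function each pick up the same conformal factor $|h'|^2$, so their quotient satisfies $F\circ h=F$ on $U_0$; evaluating at $x$ gives $F(y)=F(h(x))=F(x)$. Since $y$ was an arbitrary point of the fibre of $\alpha$ through $x$, and $x\in U$ was arbitrary, $F$ is constant on each fibre of $\alpha$. As $\alpha$ is a submersion it has smooth local sections, so $F=g\circ\alpha$ for a smooth function $g$ on the interval $\alpha(U)$.

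It remains to solve for $\phi$. For $u=\phi\circ\alpha$ one computes
\[
\Delta u=\phi''(\alpha)\,|\nabla\alpha|^2+\phi'(\alpha)\,\Delta\alpha=\phi'(\alpha)\,|\nabla\alpha|^2\Big(\tfrac{\phi''}{\phi'}(\alpha)+g(\alpha)\Big),
\]
so $u$ is harmonic precisely when $\phi''/\phi'=-g$ on $\alpha(U)$. Fixing a base point $s_0\in\alpha(U)$ and setting $\phi'(s)=\exp\!\big(-\!\int_{s_0}^{s}g\big)$ and $\phi(s)=\int_{s_0}^{s}\phi'$, we obtain such a $\phi$, with $\phi'>0$. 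Then $u=\phi\circ\alpha$ is harmonic on $U$ with $\nabla u$ nowhere zero. On a simply connected neighbourhood $B$ of $p$ contained in $U$, $u$ has a harmonic conjugate, so $u=\mathrm{Im}\,f$ for a holomorphic function $f$ on $B$ with $|f'|=|\nabla u|\neq0$; after shrinking $B$ so that $f$ is a biholomorphism onto its image, the leaves in $B$ are the fibres of $u=\mathrm{Im}\,f$, that is, the preimages under $f$ of the lines $\{\mathrm{Im}\,z_1=\mathrm{const}\}$. Hence $f$ is a holomorphic chart for $\mathcal F$ near $p$, and since $p$ was arbitrary, $\mathcal F$ has a holomorphic atlas.

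I expect the one genuine subtlety to be the step that delivers $\alpha\circ h=\alpha$ outright, rather than merely the agreement of $\alpha\circ h$ and $\alpha$ on the component of each leaf through $x$; here hypothesis (b) makes this immediate, since a leaf meets the chart $U$ in a single fibre of $\alpha$. Without (b) one would need the tubular-neighbourhood construction from the proof of Theorem~\ref{holomorphicatlas} to force connected leaf-intersections. Everything else is the conformal invariance of $F$ --- exactly the calculation already carried out for Proposition~\ref{reallinear} --- together with the elementary first-order linear ODE for $\log\phi'$.
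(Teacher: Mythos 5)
Your proof is correct and takes essentially the same route as the paper's: conformal invariance of $\alpha_{z\bar z}/|\alpha_z|^2$ under the leaf-preserving biholomorphisms from (a) shows this quantity is $g\circ\alpha$, and your reparametrisation $\phi$ solving $\phi''+g\phi'=0$ is exactly the paper's function $w$ (an antiderivative of $e^{-v}$ with $v$ an antiderivative of $g$), after which both arguments finish by taking a harmonic conjugate to produce the chart. The only addition is your explicit justification that $\alpha\circ h=\alpha$ via (b), a point the paper leaves implicit.
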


\begin{proof}  Let $U$ be an open subset of $X$ with a surjective submersion $\alpha:U\to\R$ as in (b).  We may assume that $U$ is a coordinate neighbourhood with a coordinate $z$.  Using (a) as in the proof of Proposition \ref{reallinear}, we can show that the function $\alpha_{z\bar z}/|\alpha_z|^2$ on $U$ is constant on the leaves of $\mathcal F$, that is, on the fibres of $\alpha$.  Hence, there is a smooth function $u:\R\to\R$ with $\alpha_{z\bar z}/|\alpha_z|^2=u\circ\alpha$.  Let $v$ be an antiderivative for $u$, and $w$ be an antiderivative for $e^{-v}$.  Then $w''=-uw'$, so
$$(w\circ\alpha)_{z\bar z}=(w'\circ\alpha)\alpha_{z\bar z}+(w''\circ\alpha)|\alpha_z|^2 = |\alpha_z|^2(w''\circ\alpha+(u\circ\alpha)(w'\circ\alpha))=0,$$
and $w\circ\alpha$ is harmonic.  Also, $w'=e^{-v}$ is nowhere zero, so $w$ is injective and $(w\circ\alpha)_z=(w'\circ\alpha)\alpha_z$ is nowhere zero.  Thus, $w\circ\alpha$ is a harmonic submersion whose nonempty fibres are the same as those of $\alpha$.  Locally, $w\circ\alpha$ is the imaginary part of a holomorphic function $f$, and the nowhere-vanishing holomorphic vector field $\partial/\partial\,\text{Re}f$ induces $\mathcal F$.
\end{proof}

We have been unable to generalize this result to higher dimensions.

\section{Local schlichtness at a nondegenerate boundary singularity}

\noindent
By the schlichtness lemma, a domain is locally schlicht at a smooth boundary point.  We are now able to prove local schlichtness at a well-behaved isolated boundary singularity.

A real quadratic form $\sigma$ on $\C^n$ is of the form
$$\sigma(z)=\bar z^t H z+\mathrm{Re}(z^t Sz),$$
where $S$ and $H$ are complex $n\times n$ matrices, $S$ is symmetric and $H$ is hermitian.  We call $\bar z^t H z$ the hermitian part of $\sigma$, and $\mathrm{Re}(z^t Sz)$ the harmonic part of $\sigma$.  We call $\sigma$ hermitian if its harmonic part vanishes, and harmonic if its hermitian part vanishes.  The gradient $\nabla\sigma$ of $\sigma$ is a real-linear vector field on $\C^n$.  The complex-linear part of $\nabla\sigma$ is the gradient of the hermitian part of $\sigma$, and the conjugate-linear part of $\nabla\sigma$ is the gradient of the harmonic part of $\sigma$.

Let $\rho$ be a smooth real-valued function on a neighbourhood of the origin $0$ in $\C^n$ with a nondegenerate critical point at $0$.  Then the Taylor series of $\rho$ at $0$ is $\rho(0)+\sigma+\text{higher-order terms}$, where $\sigma$ is a nondegenerate real quadratic form.  We say that the critical point $0$ of $\rho$ is hermitian if $\sigma$ is hermitian.  

\begin{theorem}  Let $\rho$ be a smooth real-valued function on a neighbourhood of the origin $0$ in $\C^n$.  Suppose $\rho$ has a nondegenerate hermitian critical point at $0$, which is not a minimum.  Then the open set $\{\rho<\rho(0)\}$ is locally schlicht at $0$.
\label{boundary}
\end{theorem}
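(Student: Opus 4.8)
The plan is to normalize $\rho$ by a complex-linear change of coordinates and then reduce to an application of the global schlichtness lemma (Theorem~\ref{Trepreau1}) together with the Main Theorem. We may assume $\rho(0)=0$. Since the hermitian quadratic part $\sigma$ of $\rho$ at $0$ is nondegenerate, a complex-linear change of coordinates brings it to the form $\sigma(z)=|z_1|^2+\dots+|z_p|^2-|z_{p+1}|^2-\dots-|z_n|^2$, and, as $0$ is not a minimum, $m:=n-p\geq 1$. Write $z=(z',z'')$ with $z'=(z_1,\dots,z_p)$ and $z''=(z_{p+1},\dots,z_n)$, so near $0$ we have $\rho=|z'|^2-|z''|^2+O(|z|^3)$; put $\Om=\{\rho<0\}$ and $B_r=\{z\in\C^n:|z|<r\}$. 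I would treat the cases $m\geq 2$ and $m=1$ separately.

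If $m\geq 2$, no foliation is needed. For $r$ small, $\Om\cap B_r$ avoids the subspace $\{z''=0\}$ (there $\rho\geq|z'|^2-C|z|^3>0$ for $z\neq 0$), and for each fixed $z'$ in the relevant range the slice of $\Om\cap B_r$ in the variable $z''$ is, up to a small perturbation of its inner radius, a spherical shell in $\C^m$ with $m\geq 2$ (and when $p=0$ there is no parameter $z'$ and $\Om\cap B_r$ is a punctured ball). By the Hartogs extension theorem, applied with $z'$ as a holomorphic parameter, every holomorphic function on $\Om\cap B_r$ extends — uniquely, since $\Om\cap B_r$ is connected — to a Stein domain $V$ with $\Om\cap B_r\subset V\subset B_r$ obtained by filling in the hole around $\{z''=0\}$. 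As the $B_r$ form a neighbourhood basis of $0$ consisting of domains of holomorphy, this proves local schlichtness at $0$ when $m\geq 2$.

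Now suppose $m=1$, so $z'=(z_1,\dots,z_{n-1})$ and $\sigma=|z'|^2-|z_n|^2$. Because $\sigma$ is hermitian, $V:=\nabla\sigma$ is complex-linear, hence a holomorphic vector field, with complete flow $\phi_t(z)=(e^{2t}z',e^{-2t}z_n)$; it is nowhere zero on $\C^n\setminus\{0\}$ and has no periodic orbit, since $\sigma$ strictly increases along its orbits. As $|\phi_t(z)|^2=e^{4t}|z'|^2+e^{-4t}|z_n|^2$ is convex in $t$, every orbit meets $B_r$ in a connected arc, so $V$ induces on $B_r\setminus\{z_n=0\}$ a foliation $\mathcal F$ with no compact leaves, which is quasiholomorphic by Theorem~\ref{holomorphicatlas}. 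Since $|\nabla\sigma(z)|=2|z|$, for $r$ small we have $\langle\nabla\rho,V\rangle=|\nabla\sigma|^2+O(|z|^3)=4|z|^2+O(|z|^3)>0$ on $B_r\setminus\{0\}$, so $\rho$ strictly increases along each leaf of $\mathcal F$; hence $\Om\cap B_r$ meets every leaf in a connected (possibly empty) set. Also $\Om\cap B_r$ avoids $\{z_n=0\}$, so it lies inside the Stein manifold $B_r\setminus\{z_n=0\}$ — namely $B_r$ with the zero set of the holomorphic function $z_n$ deleted. Since $\mathcal F$ is good (Main Theorem), there is a largest interval domain $Z$ in the $\mathcal F$-saturation of $\Om\cap B_r$ containing it, to which every holomorphic function on $\Om\cap B_r$ extends, and $Z$ is locally Stein in that saturation. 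If the saturation is Stein, then $Z$ is Stein and is the schlicht envelope of holomorphy of $\Om\cap B_r$ in $B_r$, and letting $r\downarrow 0$ gives local schlichtness at $0$.

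The main obstacle is exactly this last clause. When $\rho=\sigma$, the $\mathcal F$-saturation of $\{\sigma<0\}\cap B_r$ is all of $B_r\setminus\{z_n=0\}$, which is Stein; but the error term $O(|z|^3)$ forces $\Om\cap B_r$ to miss a thin family of leaves of $\mathcal F$ — those entering $\partial B_r$ at points of $\{\rho\geq0\}$ near the null cone $\{\sigma=0\}$ — so the saturation is an a priori irregular open subset of $B_r\setminus\{z_n=0\}$, and ``locally Stein'' need not imply ``Stein''. I expect the clean fix is to enlarge $\Om\cap B_r$ to an interval domain with the same algebra of holomorphic functions whose $\mathcal F$-saturation is all of $B_r\setminus\{z_n=0\}$ — adding a thin $\mathcal F$-saturated collar of the missing leaves, so small in the Hartogs-like directions transverse to $\mathcal F$ that no new holomorphic function appears — and then apply Theorem~\ref{Trepreau1} over the Stein manifold $B_r\setminus\{z_n=0\}$ directly. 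An alternative is to verify by hand that the domain $Z$ above is locally Stein at its boundary points on $\partial B_r$ and on $\{z_n=0\}$, using that $B_r$ is strictly pseudoconvex and that $\Om\cap B_r$ is comparable there to $\{|z_n|>|z'|\}\cap B_r$. Making either argument precise — in particular ensuring the enlarged domain is genuinely an interval domain and does not enlarge the envelope — is where the real work lies.
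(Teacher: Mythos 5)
You have set up the reduction correctly, and in the codimension-one case ($m=1$) you have located exactly the difficulty --- but you have not resolved it, and the resolution is the one genuinely nontrivial idea in the paper's proof. The fix is not to enlarge $\Om\cap B_r$ nor to check local Steinness by hand on $\partial B_r$; it is to replace the ball by a carefully calibrated \emph{ellipsoid}. Writing $\sigma(z)=\sum_j a_j|z_j|^2$ with $a_1,\dots,a_k>0$ and $a_{k+1},\dots,a_n<0$, the paper sets $\epsilon=\frac12\min\{a_1,\dots,a_k\}$, $c_j=a_j/(a_j-\epsilon)>0$, and $U_r=\{\lambda<r^2\}$ where $\lambda(z)=\sum_j c_j|z_j|^2$. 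Convexity of $\lambda\circ\gamma$ along orbits still gives connected intersections, but now the identity $\sigma+\epsilon\lambda=\sum_j a_jc_j|z_j|^2=\tfrac14\,\nabla\sigma\cdot\nabla\lambda$ shows that at any point where an orbit \emph{enters} $U_r$ (so $\nabla\sigma\cdot\nabla\lambda<0$) one has $\sigma<-\epsilon r^2$, hence $\rho<-\epsilon r^2+O(r^3)<0$ for $r$ small. Thus every orbit meeting $U_r\setminus E$ already meets $Y_r=\{z\in U_r:\rho<0\}$; the saturation of $Y_r$ is \emph{all} of $U_r\setminus E$, which is Stein when $E$ has codimension one, and Theorem \ref{Trepreau1} applies with no residual saturation problem. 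Your first proposed repair (adding a collar of the missing leaves ``with the same algebra of holomorphic functions'') is essentially circular --- proving that functions extend to the added leaves is the extension problem you are trying to solve --- and your second is not carried out; since the Kugelsatz route is unavailable when $m=1$, this case cannot be finessed, so the gap is essential.

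For $m\ge 2$ your Hartogs argument is not the paper's proof but coincides with the alternative the referee suggests after Theorem \ref{boundary}, and it too is only a sketch: even in the model case the slice-by-slice filled-in domain is not obviously Stein (for instance $\{|z''|>|z'|\}$ is itself not pseudoconvex when $m\ge 2$, so ``filling the hole'' fibrewise does not directly produce the Stein set $V$ required by the definition of local schlichtness), and the stability of the extension under the $O(|z|^3)$ perturbation of $\sigma$ needs an argument. The paper instead treats all codimensions uniformly: it runs the same foliation argument in $U_r\setminus E$ and, when $E$ has codimension at least two, fills in across $E$ using the theorem of Grauert and Remmert (\cite{GrauertRemmert}, Satz 4) on removability of thin sets of nonremovable accessible boundary points. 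That uniform treatment is what makes the ellipsoid trick, rather than the Kugelsatz, the load-bearing step.
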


Before proceeding to the proof, we observe that this result has no overlap with Corollary \ref{subgraph}.  By Morse's lemma (see \cite{Milnor}, Lemma 2.2), there are smooth coordinates $x_1,\dots,x_{2n}$ at $0$ such that
$$\rho=\rho(0)-x_1^2-\dots-x_j^2+x_{j+1}^2+\dots+x_{2n}^2$$
near $0$.  Here, $j$ is the index of $\rho$ at $0$, and $1\leq j\leq 2n$.  It is then easily seen that the local homotopy type of $\{\rho<\rho(0)\}$ at $0$ is the sphere of dimension $j-1$.  On the other hand, the domain $\Om$ in Corollary \ref{subgraph} is locally contractible at the boundary point in question.

\begin{proof}  The result is trivially true if $\rho$ has a maximum at $0$, so we assume that this is not the case and that $n\geq 2$.  We may also assume that $\rho(0)=0$.  Let $\sigma$ be the quadratic part of $\rho$ at $0$.  Write $\sigma(z)=\bar z^t Hz$, where $H$ is an $n\times n$ nonsingular hermitian matrix.  By precomposing $\rho$ by a suitable complex-linear map, we can make $H$ diagonal with positive eigenvalues $a_1,\dots,a_k$ and negative eigenvalues $a_{k+1},\dots,a_n$, where $1\leq k\leq n-1$, so $\sigma(z)=\sum\limits_{j=1}^n a_j|z_j|^2$.  The gradient of $\sigma$ is the holomorphic vector field $\nabla\sigma=2\sum\limits_{j=1}^n a_j z_j \dfrac\partial{\partial z_j}$ with integral curves $\gamma(t)=(z_1 e^{a_1 t},\dots, z_n e^{a_n t})$.  We have 
$$\dfrac d{dt}\rho(\gamma(t)) = \nabla\rho(\gamma(t))\cdot\gamma'(t)=\nabla\rho(\gamma(t))\cdot\nabla\sigma(\gamma(t)).$$
Now $\nabla\rho\cdot\nabla\sigma$ is the sum of $\|\nabla\sigma\|^2$ and terms of higher order, and is therefore positive on a punctured neighbourhood of $0$, so $\rho$ is strictly increasing along the nontrivial orbits of $\nabla\sigma$ near $0$.  Let $E=\C^k\times\{0\}$ be the subspace of $\C^n$ corresponding to the positive eigenvalues.  It is saturated with respect to the flow of $\nabla\sigma$.  Also, $\rho\geq 0$ on $E$ near $0$.  

Let $\epsilon=\frac 1 2\min\{a_1,\dots,a_k\}>0$ and let $c_j=\dfrac{a_j}{a_j-\epsilon}>0$ for $j=1,\dots,n$.  For $r>0$, let $U_r$ be the open ellipsoid centred at $0$, convex and hence Stein, defined by the inequality $\lambda(z)=\sum\limits_{j=1}^n c_j|z_j|^2 < r^2$.  Let $Y_r=\{z\in U_r:\rho(z)<0\}$.  We claim that for $r>0$ small enough, $Y_r$ has a schlicht envelope of holomorphy in $U_r$.  This proves the theorem.  

To prove our claim, it suffices to show that for $r>0$ small enough, $Y_r$ is an interval domain in $U_r\setminus E$ with respect to $\nabla\sigma$.  Namely, by Theorem \ref{Trepreau1}, there is then a largest interval domain $Z_r$ in $U_r\setminus E$ with respect to $\nabla\sigma$ containing $Y_r$ to which all holomorphic functions on $Y_r$ extend holomorphically, and $Z_r$ is locally Stein in $U_r\setminus E$.  If $k=\dim E=n-1$, then $U_r\setminus E$ is Stein, so $Z_r$ is Stein.  Then $Z_r$ is the envelope of holomorphy of $Y_r$, and the claim is proved.

If $E$ has codimension $n-k\geq 2$, we complete $Z_r$ to a domain $W_r$ in $U_r$ by adding to $Z_r$ all those points in $E$ that have a neighbourhood $V$ with $V\setminus E\subset Z_r$.  In other words, $W_r$ is the largest domain in $U_r$ containing $Z_r$.  Every holomorphic function on $Z_r$ extends to $W_r$.  We know that $W_r$ is locally Stein at its boundary points outside $E$.  In the sense of Grauert and Remmert \cite{GrauertRemmert}, the accessible boundary points of $W_r$ over $E$ are nonremovable and form a thin closed set in the accessible boundary of $W_r$.  Hence, by Satz 4 in \cite{GrauertRemmert}, $W_r$ is Stein, so $W_r$ is the envelope of holomorphy of $Y_r$, and the claim is proved.

It remains to show that for $r>0$ small enough, $Y_r$ is an interval domain in $U_r\setminus E$ with respect to $\nabla\sigma$.  First note that each orbit of $\nabla\sigma$ intersects $U_r$ in an interval (possibly empty) because 
$$(\lambda\circ\gamma)''(t)=\frac {d^2}{dt^2}\sum\limits_{j=1}^n c_j|z_j|^2e^{2a_jt} = 4\sum\limits_{j=1}^n a_j^2 c_j|z_j|^2e^{2a_jt}$$
is always nonnegative.  Since, as observed earlier, $\rho$ is strictly increasing along the nontrivial orbits of $\nabla\sigma$ near $0$, it follows that for $r>0$ small enough, each orbit of $\nabla\sigma$ intersects $Y_r$ in an interval (possibly empty).

Finally, we must show that for $r>0$ small enough, each orbit of $\nabla\sigma$ that intersects $U_r\setminus E$ also intersects $Y_r$.  An orbit that intersects $U_r\setminus E$ enters $U_r$ at a point $z$ where $\nabla\sigma$ points into $U_r$, that is,
$$\nabla\sigma\cdot\nabla\lambda = 4\sum\limits_{j=1}^n a_j c_j|z_j|^2 < 0,$$
so
$$\sigma(z)+\epsilon r^2 = \sigma(z)+\epsilon\lambda(z) = \sum\limits_{j=1}^n (a_j+\epsilon c_j)|z_j|^2 = \sum\limits_{j=1}^n a_j c_j|z_j|^2 < 0.$$
Thus, for $r>0$ small enough,
$$\rho(z)=\sigma(z)+O(\|z\|^3)< -\epsilon r^2+O(r^3) < 0,$$
so the orbit intersects $Y_r$.
\end{proof}

The referee has suggested an alternative proof in case $k\leq n-2$.  First, in the model case $\rho=\sigma$, the Kugelsatz applied with fixed $z_1,\dots,z_k$ provides holomorphic extensions to neighbourhoods of $0$.  Second, it may be argued that this extension property is stable under perturbation of $\sigma$ by higher-order terms.

We do not know whether the assumption that the singularity is hermitian can be omitted, but the following result suggests that our methods cannot take Theorem \ref{boundary} beyond the hermitian case.

\begin{theorem}  Let $\sigma$ be a nondegenerate real quadratic form on $\C^n$.  There is a positive smooth function $u$ on a domain $\Om$ in $\C^n$ such that $u\nabla\sigma$ is holomorphic on $\Om$ (so $\nabla\sigma$ is quasiholomorphic on $\Om$) if and only if
\begin{enumerate}
\item  $\sigma$ is hermitian, that is, $\nabla\sigma$ is complex-linear, or
\item  $n=1$ and $\sigma$ is harmonic, so $\sigma(z)=\mathrm{Re}(az^2)$ for some $a\in\C$, $a\neq 0$.
\end{enumerate}
\label{disappointment}
\end{theorem}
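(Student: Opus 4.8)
The plan is to turn the hypothesis ``$u\nabla\sigma$ is holomorphic'' into a first-order equation for $\log u$ and extract algebraic consequences. Write $\sigma(z)=\bar z^t Hz+\mathrm{Re}(z^tSz)$ with $H$ hermitian and $S$ symmetric, so that $\nabla\sigma=2\sum_j(L(z))_j\,\partial/\partial z_j$, where $L(z)=Hz+\bar S\bar z$, and note that $(L(z))_j=\sigma_{\bar z_j}$, so $L$ vanishes precisely at the critical points of $\sigma$; hence nondegeneracy of $\sigma$ is exactly the statement that $L$, viewed as a real-linear map $\C^n\to\C^n$, is an automorphism. Now $u\nabla\sigma$ is holomorphic if and only if each coefficient $u\,(L(z))_i$ is, and since $\partial_{\bar z_j}(L(z))_i=\bar S_{ij}$, applying $\partial_{\bar z_j}$ and dividing by $u>0$ shows this is equivalent to
\begin{equation*}
(\log u)_{\bar z_j}\,(L(z))_i=-\bar S_{ij}\qquad\text{for all }i,j.
\tag{$\ast$}
\end{equation*}
The easy direction follows at once: if $\sigma$ is hermitian then $S=0$ and $\nabla\sigma$ is complex-linear, so $u\equiv1$ works; and if $n=1$ and $\sigma(z)=\mathrm{Re}(az^2)$, then $\nabla\sigma=2\overline{az}\,\partial/\partial z$ is antiholomorphic, so $u=1/|az|^2$ works on $\C\setminus\{0\}$, exactly as in the proof of Proposition \ref{antiholomorphic}.

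For the converse, suppose such a $u$ exists on a domain $\Omega$; shrinking $\Omega$ to a small ball, we may assume $0\notin\Omega$. Suppose first that $S\neq0$, and pick $p,q$ with $\bar S_{pq}\neq0$. The instance of $(\ast)$ with $i=p$, $j=q$ has nonzero right-hand side, so $(\log u)_{\bar z_q}$ and $(L(z))_p$ are nowhere zero on $\Omega$. Differentiating $(\ast)$ with respect to $\bar z_k$, using $\partial_{\bar z_k}(L(z))_i=\bar S_{ik}$ and the symmetry of $(\log u)_{\bar z_j\bar z_k}$ in $j$ and $k$, gives $(\log u)_{\bar z_j}\bar S_{ik}=(\log u)_{\bar z_k}\bar S_{ij}$; with $i=p$, $k=q$ this says every $(\log u)_{\bar z_j}$ is a constant multiple of the nowhere-vanishing $(\log u)_{\bar z_q}$. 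Plugging $j=q$ back into $(\ast)$ then gives $(L(z))_i=-\bar S_{iq}/(\log u)_{\bar z_q}$ for all $i$, so $L$ maps the open set $\Omega$ into the complex line spanned by the nonzero vector $(\bar S_{iq})_i$. Since $L$ is an isomorphism, this is impossible unless $n=1$.

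It remains to treat $n=1$, where $H=(b)$ with $b\in\R$, $S=(a)$ with $a\in\C\setminus\{0\}$, and $L(z)=bz+\bar a\bar z$; the goal is $b=0$. Here $(\ast)$ reads $(\log u)_{\bar z}=-\bar a/(bz+\bar a\bar z)$, and since $\log u$ is real, $(\log u)_z=\overline{(\log u)_{\bar z}}=-a/(az+b\bar z)$. Writing $P=az+b\bar z$, so that $\bar P=bz+\bar a\bar z$, the identity $\partial_z(\log u)_{\bar z}=\partial_{\bar z}(\log u)_z$ becomes $b\bar a\,\bar P^{-2}=ba\,P^{-2}$. If $b\neq0$, dividing by $b$ shows $aP^{-2}$, equivalently $a\bar P^2$, is real everywhere on $\Omega$; but nondegeneracy of $\sigma$ gives $|a|\neq|b|$, so $z\mapsto P$ is an $\R$-linear automorphism and, as $0\notin\Omega$, the values of $P$ fill a nonempty open subset of $\C\setminus\{0\}$, whence $a\bar P^2$ fills a nonempty open subset of $\C$ — which cannot lie in $\R$. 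So $b=0$ and $\sigma$ is harmonic, as claimed.

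The computation itself is elementary; the points to get right are the bookkeeping of the real-linear structure — in particular the equivalence between nondegeneracy of $\sigma$ and invertibility of $L$, which is what converts ``the image of $L$ lies in a complex line'' into ``$n=1$'' — and the verification that the denominators appearing in the manipulations above are genuinely nonzero, which is the reason for choosing $\Omega$ away from the origin.
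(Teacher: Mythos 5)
Your proof is correct and follows essentially the same route as the paper: your identity $(\ast)$ is (up to complex conjugation) the equation $u_{z_j}\sigma_{z_k}+u\,\partial^2\sigma/\partial z_j\partial z_k=0$ that drives the paper's argument, and your $n=1$ endgame --- reality of $(\log u)_{z\bar z}$ forcing $ab(a\bar z+bz)^{-2}$ to be real, which nondegeneracy rules out --- is exactly the paper's. The only variation is the reduction to $n=1$: where the paper notes that $S$ has rank at most $1$, normalizes it by a linear change of coordinates, and then forces $H$ to be concentrated in the $(1,1)$ entry so that $\sigma$ is degenerate for $n\geq 2$, you show directly that the real-linear map $L$ sends the open set $\Om$ into a single complex line, contradicting its invertibility --- a slightly more coordinate-free version of the same idea, and you also supply the easy ``if'' direction that the paper leaves implicit.
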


\begin{proof}  Write $\sigma(z)=\bar z^t H z+\mathrm{Re}(z^t Sz)$, where $S$ and $H$ are complex $n\times n$ matrices, $S$ is symmetric and $H$ is hermitian.  Suppose there is a positive smooth function $u$ on a domain $\Om$ in $\C^n$ such that $u\nabla\sigma$ is holomorphic on $\Om$.  Denote the standard coordinates on $\C^n$ as $z_k=x_k+iy_k$, $k=1,\dots,n$.  Holomorphicity of $u\nabla\sigma$ means that
$$u\bigg(\frac{\partial\sigma}{\partial x_k}+i\frac{\partial\sigma}{\partial y_k}\bigg)=u\frac{\partial\sigma}{\partial\bar z_k}$$ 
is holomorphic, that is, 
$$0=\frac\partial{\partial z_j}\bigg(u\frac{\partial\sigma}{\partial z_k}\bigg) = \frac{\partial u}{\partial z_j}\frac{\partial\sigma}{\partial z_k}+u\frac{\partial^2\sigma}{\partial z_j\partial z_k}$$
for $j,k=1,\dots,n$.  Thus, the Hessian matrix $\bigg[\dfrac{\partial^2\sigma}{\partial z_j\partial z_k}\bigg]$, which is simply $S$, has rank $0$ or $1$.  If $S=0$, then (1) holds.  Suppose $S$ has rank $1$.  We need to show that $n=1$ and $H=0$.

After a complex-linear change of coordinates, $S_{jk}=0$ except for $j=k=1$.  Hence,
$$0=\frac{\partial\sigma}{\partial z_k} = \sum_{j=1}^n H_{jk}\bar z_j$$
for $k\geq 2$, so $H_{jk}=0$ for $j\geq 1$ and $k\geq 2$.  Since $H$ is hermitian, it follows that $H_{jk}=0$ except for $j=k=1$.  This implies that $\sigma$ is degenerate if $n\geq 2$, so we conclude that $n=1$.

Write $\sigma(z)=a|z|^2+\mathrm{Re}(bz^2)$ with $a\in\R$ and $b\in\C$.  We have $u_z\sigma_z=-u\sigma_{zz}$.  Since $u$ is real,
$$(\log u)_{z\bar z} = \bigg(\frac{u_z}u\bigg)_{\bar z} = -\bigg(\frac{\sigma_{zz}}{\sigma_z}\bigg)_{\bar z} = -\bigg(\frac b{a\bar z+bz}\bigg)_{\bar z} = \frac{ab}{(a\bar z+bz)^2}$$
is real for all $z\in\Om$.  Thus, $a=0$ or $b=0$, which is what we want to prove, or $(a\bar z+bz)^2/b$ is real, that is, $\sqrt{b} z+a\bar z/\sqrt{b}$ is real or imaginary for all $z\in\Om$.  By a brief calculation, it follows that $a=\pm|b|$, which means precisely that $\sigma$ is degenerate.
\end{proof}

\end{document}